\newtheorem{theorem}{Theorem}
\newcommand{\X}{{\mathcal{X}}}
\newcommand{\FIspace}{\ensuremath{\Phi_I}} 
\newcommand{\FI}{\ensuremath{\bm{\phi}_I}} 
\newcommand{\FSspace}{\ensuremath{\Phi_S}} 
\newcommand{\FS}{\ensuremath{\bm{\phi}_S}} 
\newcommand{\px}{{\bm{x}}}
\newcommand{\feat}{{f}}
\newcommand{\dash}{---}
\begin{document}

\title{Feature-Based Interpretable Surrogates for Optimization}

\author[1]{Marc Goerigk}
\author[2]{Michael Hartisch}
\author[1]{Sebastian Merten\thanks{Corresponding author. Email: sebastian.merten@uni-passau.de}}
\author[3]{Kartikey Sharma}

\affil[1]{Business Decisions and Data Science, University of Passau,\authorcr Dr.-Hans-Kapfinger-Str. 30, 94032 Passau, Germany}
\affil[2]{Analytics \& Mixed-Integer Optimization, University of Erlangen–Nuremberg,\authorcr Cauerstraße 11, 91058 Erlangen, Germany}
\affil[3]{Interactive Optimization and Learning Laboratory, Zuse Institute Berlin,\authorcr Takustr. 7, 14195 Berlin, Germany}

\date{}

\maketitle

\begin{abstract}
For optimization models to be used in practice, it is crucial that users trust the results. A key factor in this aspect is the interpretability of the solution process. A previous framework for inherently interpretable optimization models used decision trees to map instances to solutions of the underlying optimization model. Based on this work, we investigate how we can use more general optimization rules to further increase interpretability and, at the same time, give more freedom to the decision-maker. The proposed rules do not map to a concrete solution but to a set of solutions characterized by common features. To find such optimization rules, we present an exact methodology using mixed-integer programming formulations as well as heuristics. We also outline the challenges and opportunities that these methods present. 
In particular, we demonstrate the improvement in solution quality that our approach offers compared to existing interpretable surrogates for optimization, and we discuss the relationship between interpretability and performance. These findings are supported by experiments using both synthetic and real-world data.
\end{abstract}

\noindent\textbf{Keywords:} interpretable surrogates; interpretability and explainability; data-driven optimization; contextual optimization; optimization under uncertainty

\section{Introduction}\label{sec::intro}
\subsection{Motivation}

The widespread availability of easy-to-use, off-the-shelf machine learning tools has dramatically expanded their use. Yet users often perceive these tools as black boxes. Consequently, the importance of interpretability and explainability
has increased significantly in recent years, especially in areas where large datasets form the basis. For optimization problems, however, only recently have researchers started to aim for more interpretable or explainable approaches. One reason for this delay might be that, despite the availability of solvers, expert knowledge is still crucial for appropriately modeling optimization problems. These experts possess a deep understanding of both the modeling and solution techniques involved. Consequently, there has been little intrinsic motivation for mathematicians and optimization specialists to prioritize more interpretable or explainable solution techniques.

However, this perspective is not shared by all stakeholders especially those who must accept and implement the optimization results. For instance, workers who are directly affected by optimized decisions, such as those scheduled or guided by the outcome of an optimization process. They need to understand these decisions so as to help them prepare for future outcomes, e.g., potential overtime or subsequent planning decisions. The lack of transparent decision processes or explanations can lead to discontent and loss of trust, resulting in poor adoption of optimized decisions and ultimately rendering the optimization process ineffective.
Interpretable solution approaches also allow users to adapt to changing circumstances on the fly, leading to better outcomes.

Therefore, to encourage acceptance of actions based on optimization programs and build trust in their outcomes, clear, practical, and easily understandable explanations are essential. Justifying \textit{why} a system recommends certain actions is more important to stakeholders than explaining \textit{how} the solution is obtained~\citep{ji2000use,ye1995impact}. Hence, providing the underlying optimization model may not be as useful since ``disclosure alone does not guarantee that anyone is paying attention or is able to accurately interpret the information; more complex information is more likely to be unexamined or misunderstood''~\citep{prat2005wrong}.

Moreover, offering adequate information about the company, its decisions, and the worker's role within it can enhance their identification with the company~\citep{smidts2001impact}. To this end, effective and transparent communication is crucial in fostering trust and positive relationships between employees and the organization~\citep{rawlins2008measuring,men2014effects,yue2019bridging,doi:10.1287/mnsc.2017.2906}.
Conversely, reliance on complex and hard-to-understand decision models can foster skepticism and a reluctance to adopt decision-support systems, even if these models have been shown to improve decision-making performance~\citep{arnold2006differential,kayande2009incorporating}. 

Therefore, it is essential for managers and decision-makers to provide insights into decisions derived from optimization processes. As a first step, incorporating workers into the feedback cycle of the modeling process is crucial, as providing explanations during this process enhances the feedback loop~\citep{chakraborti2019plan}. 
Already in this phase, interpretable surrogates unlock several practical benefits beyond transparency: they make model faults (e.g., missing constraints, miscalibrated thresholds, unintended biases) immediately apparent to non-experts, simplifying debugging, and they enable straightforward counterfactual explanations by showing which minimal feature changes would alter a recommendation.
However, even after a model is agreed upon, stakeholders still need clear and easily understandable explanations of when and why different solutions are considered. This is evident in decision support systems, where decision-makers or stakeholders must explain outcomes, such as in loan approval processes~\citep{sachan2020explainable,strich2021world} or medical consultations~\citep{scott1977explanation,swartout1985explaining,lamy2019explainable}. Ultimately, the European ``right to explanation'' ensures that users can request an explanation for any algorithmic decision made about them~\citep{goodman2017european}.

The main contribution of this paper is not only to generate interpretable surrogates for optimization, but to explore how their inputs and outputs should be presented. We argue that, in practice, conveying only high-level solution features---rather than a fully specified solution---better aligns with human decision‐making. For instance, in the shortest-path setting, route-choice studies show that travelers rely on a handful of landmarks to guide their routes, rendering strict optimal paths poor predictors of actual behavior \citep{manley2015heuristic,manley2015shortest}. This insight suggests that feature-based recommendations support intuitive decision-making and that offering suboptimal yet easily comprehensible guidance aligns with real-world practice, since humans rely on heuristics and seldom pursue strictly optimal solutions. Moreover, by prescribing a set of essential features (e.g., ``pass through landmarks A, B, and C'', ``avoid city center''), we leverage users’ domain knowledge to fill in the implementation details, reducing the need for micromanagement. Such flexibility also accommodates minor environmental changes: the features can flexibly map to slightly different realizations without compromising the outlined solution scope. In contrast, prescribing a single rigid solution risks being overly constraining---potentially leading users away from near-optimal outcomes---whereas solution features strike a balance between guidance and adaptability. Similarly, representing a new instance using features is often more practical in real-world settings. Beyond this convenience, observable features---such as seasonality or weather---can significantly influence decisions, and omitting them can lead to systematically biased or inconsistent outcomes~\citep{ban2019big}.

With this mindset, our goal is to provide straightforward, easily comprehensible, feature-based rules that make it possible to deduce a result from the instance itself. In other words, we are aiming for an interpretable surrogate for the optimization process that indicates meaningful solution features while empowering users to determine the precise implementation. This differs from explaining a solution, which merely requires a post hoc justification of the result. It is important to note that any interpretable approach is inherently explainable, as one can simply reference the rule used to derive the solution. However, the reverse is rarely true: explainable approaches usually are not interpretable~\citep{rudin2019stop}.

In this paper, we rethink the framework presented in~\citet{GOERIGK20231312}. There, the authors aimed to find comprehensible optimization rules that map cost scenarios to solutions. However, this approach has the drawback of not allowing the user to deviate from the provided solution, making it difficult to adapt to unexpected situations. Furthermore, when describing an instance or a solution comprehensibly in practice, one typically focuses on the most relevant features rather than detailing every minor aspect. Hence, in this paper, we extend the interpretable optimization framework to make use of features. In particular, any given vector of instance features is mapped (in a comprehensible way) to a vector of solution features. We call the corresponding set of solutions that exhibit these features a meta-solution. Figure~\ref{fig:summary} provides a schematic comparison of our approach and the related procedures used in data-driven optimization.

\begin{figure}[htb]
    \centering
    \includegraphics[width=\linewidth]{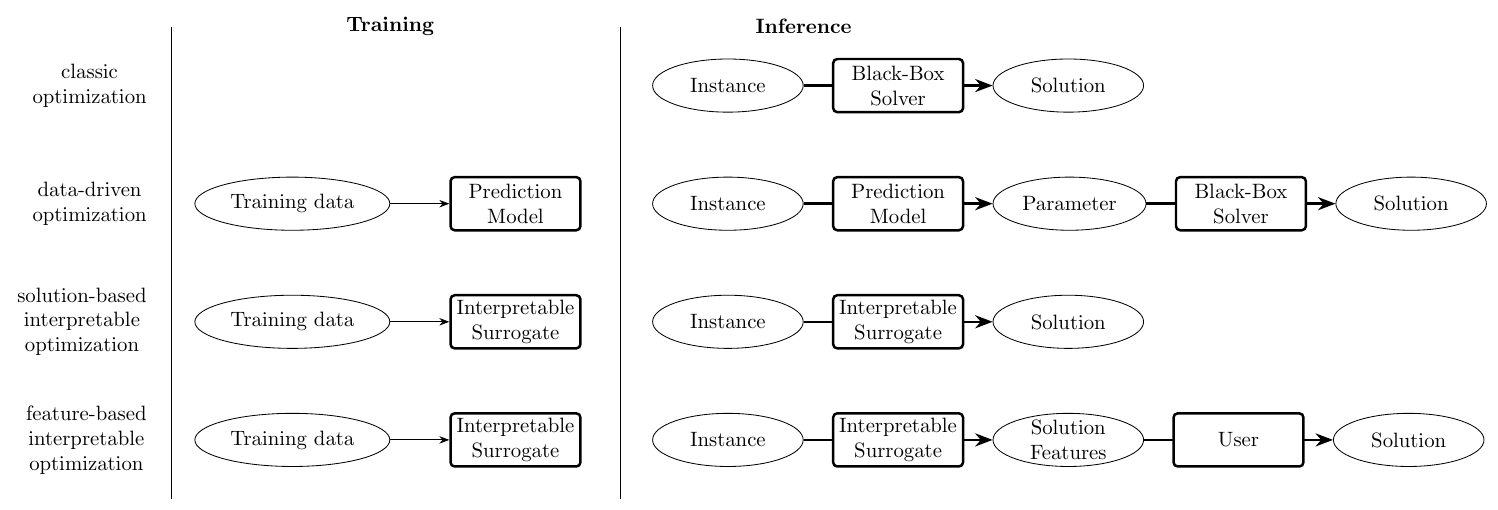}
    \caption{
Comparison of the proposed feature-based framework with related approaches for solving optimization problems. In our method, the traditional use of a black-box solver is replaced by an interpretable surrogate model, which is first trained on available data. For a new problem instance, the surrogate specifies a restricted solution space using comprehensible features, enabling the user to select a solution suitable for their situation.}
    \label{fig:summary}
\end{figure}

\subsection{Related Literature}
Interpretability and explainability have only recently gained attention in the field of optimization. In contrast, the broader field of artificial intelligence (AI) has seen a rapid increase in interest regarding explainable AI~\citep{adadi2018peeking} and more interpretable approaches~\citep{rudin2019stop} over the last decade. This shift in AI underscores the importance of transparency and trust in complex systems, a principle that is equally vital in optimization.

To provide context, we first discuss what constitutes ``easily comprehensible'' descriptions, acknowledging the subjectivity of this terminology, before we present literature aimed at enhancing comprehensibility. Several studies have investigated which representation types are most comprehensible for human users~\citep{miller2019explanation, arzamasov2021comprehensible}. These studies suggest that decision trees and classification rules~\citep{baehrens2010explain, huysmans2011empirical, freitas2014comprehensible}, as well as linear models~\citep{setiono1997neurolinear, parker2015evaluating, ustun2016supersparse}, are particularly understandable, with sparsity being a significant factor contributing to their comprehensibility~\citep{gleicher2016framework}. To achieve comprehensible mappings from instances to solutions as well as to balance quality and interpretability, several approaches have been proposed. These include post hoc simplification to improve interpretability~\citep{nguyen2012computational}, enforcing linear representations~\citep{branke2015hyper}, and semantically constraining the generated rules~\citep{hunt2016genetic}. Overall, most approaches claim interpretability based on the reduced size of the resulting rules and expressions.

\paragraph{Interpretable optimization approaches.}
Interpretability calls for an easily comprehensible mapping from instances to solutions. In this sense, linear programming can be considered an interpretable framework: given an instance, one can apply the simplex algorithm (an ``easily comprehensible'' rule) to find the solution. However, as previously discussed, the ability to comprehend such methods greatly depends on the individual's domain knowledge. Therefore, more generally comprehensible mappings are needed to achieve broader interpretability. This necessitates optimization rules that balance quality and interpretability. Consequently, the goal of obtaining interpretable optimization rules can be viewed as finding easily comprehensible heuristics. This approach is closely linked to the field of generation hyper-heuristics~\citep{drake2020recent}.
Early approaches in this field already aimed to ensure interpretability by creating representations with limited size~\citep{burke2007automatic, nguyen2017pso}.
Various strategies have been developed to harmonize quality and interpretability, such as simplifying heuristics post hoc~\citep{nguyen2012computational}, using linear representations for clarity~\citep{branke2015hyper}, and imposing semantic constraints on the rules~\citep{hunt2016genetic}.
Recent applications of hyper-heuristics that also strive for interpretability include routing policies~\citep{wang2019novel}, dispatching rules~\citep{ferreira2022effective}, and online combinatorial optimization problems~\citep{zhang2022deep}. Note that most of these approaches to generate heuristics use genetic programming~\citep{burke2019classification} or other search algorithms, which stands in contrast to the optimization approach presented in this paper.

Furthermore, approaches tailored for specific settings have been introduced, such as a method for stopping in stochastic systems~\citep{doi:10.1287/mnsc.2020.3592} and a mixed integer programming approach to obtain interpretable tree-based models for clustering~\citep{bertsimas2021interpretable}. However, these methods only assign class labels (stop/continue, cluster membership) in an interpretable manner and do not apply to general optimization settings.

In contrast, the framework proposed in this paper provides an interpretable optimization rule that serves as a surrogate for the optimization process and can be applied to any optimization problem. It allows for easily comprehensible mappings from instances to understandable solution spaces described by features. This approach generalizes the method presented in~\citet{GOERIGK20231312}, which had the limitation of only being able to map to concrete solutions, thus reducing flexibility when encountering new scenarios. In the rest of this work, we will also refer to this framework as the \textit{solution-based approach} and to optimal decision rules generated using it as \textit{solution-based interpretable surrogates}. Other works, which pursue more general concepts, deal with the generation of decision trees which map to solution strategies \citep{bertsimas2021voice} and the robustification of the solution-based approach \citep{goerigk2024towards}.

\paragraph{Explainable optimization.}

In contrast to interpretability, explainability involves post hoc justifications for why a particular solution was chosen without necessarily providing true insights into the underlying mechanism. In optimization, necessary and sufficient optimality conditions and sensitivity analysis can serve as forms of explanation. Additionally, contrastive explanations can help clarify why the found solution is optimal. These explanations often require a mathematical background and a deep understanding of the optimization model.

To address this complexity, various methods for enhancing explainability in optimization have emerged in recent years, drawing inspiration from AI techniques. In~\citet{de2023explainable}, the authors outline how AI methods solving operations research problems should be used responsibly, also considering explainability as one critical aspect. To advance explainable optimization, several domain-specific approaches evolved, such as argumentation-based explanations for planning problems~\citep{collins2019towards, oren2020argument}, scheduling problems~\citep{vcyras2019argumentation, vcyras2021schedule}, and workforce allocation~\citep{poveda2024trustworthy}. Later, more general methods were developed for dynamic programming~\citep{erwig2021explainable}, multi-stage stochastic optimization~\citep{tierney2022explaining}, and linear optimization~\citep{kurtz2024counterfactual}. Furthermore, data-driven explainability frameworks were introduced in~\citet{aigner2023framework} and~\citet{forel2023explainable}, where the importance of selecting suitable features is a crucial aspect \citep{aigner2025feature}. Additionally, for the case of multi-objective optimization problems, where interaction with a decision maker is crucial to balance different criteria and reach the best compromise on the Pareto frontier, several methods have been proposed to enhance explainability~\citep{sukkerd2018toward, misitano2022towards, corrente2024explainable, misitano2024exploring}.

\paragraph{Other related research.}

Optimization under uncertainty is highly relevant to our work, as we aim for an optimization rule that has to work well for any (unknown) data that we might observe in the future. In particular, problems with a two-stage setting are closely related. In our setting, the first stage consists of finding an appropriate optimization rule, such that for the anticipated scenarios, the realized solution in the second stage (that has to adhere to the optimization rule) minimizes the overall decision criteria, which can be for example, the worst-case or the Laplace criteria. A very close connection can be drawn to so-called decision rules~\citep{bertsimas2015design,georghiou2019decision}, where in a multistage setting, decisions that need to be taken in later stages are merged into a decision rule, which has to be determined in the first decision stage. In particular, a decision rule specifies the reaction to a scenario and can, therefore be viewed as an explanation for how a scenario affects a decision. However, the primary motivation for using decision rules is not to enhance the comprehensibility of decisions in later stages but to obtain approximate decisions in a reasonable time. The selection of function classes for decision rules is based mainly on computational performance and approximation quality. Also, in the realm of optimization under uncertainty, a connection can be drawn to $K$-adaptability~\citep{buchheim2017min, hanasusanto2015k,malaguti2022k}, where the best $K$ solutions are selected in the first decision stage, with the decision of which solution to choose in a specific scenario being deferred until the scenario is revealed. Notably, there is no rule provided for mapping scenarios to these solutions, and furthermore, these solutions are static, in contrast to our setting of meta-solutions. Only in recent studies has a connection been explored between $K$-adaptability and decision rules~\citep{subramanyam2020k, vayanos2020robust}, but these efforts have prioritized improving computational properties rather than enhancing comprehensibility.

Furthermore, predictive prescriptions~\citep{bertsimas2020predictive} and, in particular, prescriptive trees~\citep{bertsimas2019optimal,jo2021learning} were introduced, prescribing decisions based on observation. Prescriptive trees are closely related to decision trees, and hence their interpretability is frequently emphasized. The main difference to our approach is that our candidates assigned to the leaves result from an optimization process rather than from classification. In particular, we do not assign prefixed solutions but select suitable ones via optimization. 

Another related research field is contextual optimization \citep{sadana2024survey}, in particular the smart ``predict then optimize'' framework~\citep{elmachtoub2022smart}, which links the prediction of parameter data to the optimization process for which this data is intended. Instead of minimizing the loss of the predicted parameters, the focus is on the loss in cost that results from using predicted parameters compared to true parameters. There have also been advancements in making the predict-and-optimize framework more explainable~\citep{blanquero2023explainable}. Furthermore, the integrated use of decision trees was investigated to increase the interpretability of the approach \citep{elmachtoub2020decision}. Our approach, on the other hand, merges prediction and optimization into a single mechanism, mapping features directly to solutions rather than to parameters that are then used to find an optimal solution. In particular, we do not predict parameters at all; instead, we propose (meta-)solutions.

\subsection{Contribution and Structure}

In this paper, we address the research question of how to obtain interpretable surrogates for optimization and introduce a novel feature-based framework for deriving such optimization rules. While the solution-based approach seeks to find surrogates that map cost parameters to solutions, we instead propose mapping instance \textit{features} to solution \textit{features}. An instance feature can include not only parameters of the given optimization problem but also contextual data. Given solution features can be regarded as a meta-solution, i.e., a representative for a set of feasible solutions for the optimization problem. We therefore extend the flexibility of the setting, thus resulting in better solutions for the decision-maker. We also give insights into the question of how high the cost of enforcing interpretability in optimization is through a computational study.

Our feature-based framework brings several further impacts:
(i) It is easier to explain a meta-solution as we only need to describe the key features corresponding to the solution set instead of an individual solution. Furthermore, features are of a smaller dimension and thus are easier to describe. This will strengthen the acceptance of managerial decisions.
(ii) Using general instance features, rather than only working with coefficients in the objective function, expands the range of problems the framework can cope with. In particular, by using instance and solution features, the dimension of the underlying problem no longer needs to be fixed. The domain now can depend on the instance features, and hence, the framework can be applied to more general settings.
(iii) By providing meta-solutions, we assume that a user is able to identify the specifics of the preferred solution, giving credit to his or her domain knowledge. Additionally, but somewhat contrarily, the optimization rule can be provided to a stakeholder as an argument for why the current solution was implemented.
(iv) From a managerial perspective, providing a meta-solution rather than a specific solution allows the planner greater flexibility in demonstrating that the implemented solution aligns with pre-established rules. However, this approach somewhat calls for ensuring that meta-solutions remain as general as possible to accommodate various interpretations and implementations.

The remainder of this paper is structured as follows. In Section~\ref{sec:framework}, we formally define our framework for feature-based interpretable surrogates and outline how this framework can be applied in the context of different example problems. We then derive mixed-integer programming models in Section~\ref{sec:models}, focusing on the knapsack problem and the shortest path problem, where we also provide hardness proofs. In Section~\ref{sec:heuristics}, we introduce heuristics for our framework. Our methods are applied in Section~\ref{sec:experiments}, where we present numerical experiments involving artificial and real-world data. Finally, Section~\ref{sec::concl} summarizes our paper and provides further research questions.

Throughout the paper, we focus on univariate, binary decision trees of fixed depth to ensure comprehensibility. Hence, by performing easy queries on the instance features to obtain a meta-solution, the optimization process becomes interpretable. Other interpretable mappings from instance features to solution features could be used instead. We use the notation $[n]:=\{1,\ldots,n\}$ to denote index sets and write vectors in bold font. Furthermore, we use ``$\min$'' and ``$\max$'' in optimization problems rather than ``$\inf$'' and ``$\sup$'' to denote the direction of optimization and assume that problems are sufficiently well-posed to allow for the existence of the minimum or maximum, respectively.

\section{Feature-Based Framework for Finding Interpretable Surrogates}
\label{sec:framework}

\subsection{The Framework}\label{subs::framework::framework}

Consider the following nominal optimization problem
\begin{equation*}
    \label{prob:nominal}
    \begin{aligned}
        \min_{\px\in X} c(\px),
    \end{aligned}
\end{equation*}
where $X \subseteq \mathbb{R}^n$ denotes the set of feasible solutions, $n \in \mathbb{N}$ is the number of variables, and $c:X\rightarrow \mathbb{R}$ is an objective function. We assume that an overarching problem domain $\X$ can be modeled, which dictates specific constraints but might lack a fixed dimension. For example, $\X$ may denote the union of all paths or matchings within graphs of various sizes.
In this context, while the fundamental problem setting $\X$ remains the same, several parameters, referred to as \textit{instance features}, are subject to variation. Let $\FIspace\subseteq \mathbb{R}^{F_I}$ be the space of possible realizations of the $F_I\in\mathbb{N}$ instance features. For $\FI \in \FIspace$ the optimization problem is given by
$$\min_{\px \in \X(\FI)} c(\FI,\px),$$
with $\X(\FI)\subseteq \X$ and $c:\FIspace\times\X \rightarrow \mathbb{R}$,
i.e., the instance features can affect the solution space as well as the objective function. We assume a given finite set of $N\in\mathbb{N}$ candidate scenarios also referred to as training scenarios, each represented by its respective instance features $\FI^j\in \FIspace$, $j\in[N]$, with probabilities $\bm{p} \in [0,1]^N$, $\sum_{j\in [N]} p_j=1$. 
We want to find an interpretable optimization rule. 
This is a function that maps a scenario given by its instance features to a meta-solution, i.e.,~a collection of solutions characterized by easily understandable features shared among them. To that end, we assume there exists a set of $F_S\in \mathbb{N}$ \textit{solution features} (stemming from problem-specific knowledge) that describe a solution in an easily comprehensible manner. Let $\FSspace \subseteq \mathbb{R}^{F_S}$ be the solution feature space and for any solution $\px\in \X$, let $\FS(\px) \in \FSspace$ extract solution features from solution $\px$.

The features are selected carefully to i) provide meaningful insights into the structure of the solution, ii) be easily comprehensible for the stakeholder, and iii) allow a practitioner to easily compute a solution when having these features at hand. 
When both the provided features are comprehensible and the circumstances dictating the employment of specific solution features are elucidated, an interpretable surrogate for the optimization process is established. Additionally, in contrast to solution-based interpretable surrogates, by relying on the instance and solution features, the problem dimensions can change, while the provided optimization rule remains intact.

We aim to find an easily comprehensible optimization rule  $a:\FIspace\rightarrow \FSspace$ that maps new instances (given by their instance features) to a set of feasible solutions (given by solution features). In particular, it describes the characteristics of a good solution by providing information on the specifics of each solution feature, depending on the instance features. Solution features do not uniquely describe a solution, but rather a set of solutions. We call $\FS\in\FSspace$ a \textit{meta-solution}, i.e., the solution space specified by the feature characteristics. The used solution in a new instance ought to conform to the specified structure of the meta-solution, while still giving the decision maker some leeway. Hence, even if two different instances call for the same meta-solution, the implemented solutions can differ. We sometimes refer to specific solutions $\bm{x}$ as ``micro''-solutions to emphasize the difference.

As the main goal is to obtain a comprehensible optimization rule, the structure of allowed optimization rules $A\subseteq\{a:\FIspace\rightarrow \FSspace\}$ is crucial. In this paper, we focus on decision trees with a restricted depth, where the branching decisions outline the rule. This way, the number of meta-solutions that can occur is small and the mapping of a scenario to a meta-solution is easily comprehensible. Note, however, that other approaches, e.g., linear optimization rules, can also be employed in this framework.

To formalize our approach, we consider a decision criterion $\mu_{\bm{p}}:\mathbb{R}^N \to \mathbb{R}$ that aggregates a vector of results to a single objective value. Our optimization problem is given by
\[
\min_{a \in A} \mu_{\bm{p}} \left( C(\FI^1,a(\FI^1)),\ldots, C(\FI^N,a(\FI^N)) \right),
\]
where $C:\FIspace \times \FSspace \rightarrow \mathbb{R}$ is a mapping from the instance and solution features to the objective value of one specific solution. In this paper, we consider
$$C(\FI,a(\FI))=\min_{\substack{\bm{x} \in \X(\FI)\\ \FS(\bm{x})=a(\FI)}} c(\FI,\bm{x}),$$
i.e., after the optimization rule has outlined a meta-solution $a(\FI)$, the best solution within the solution space is selected that exhibits these features. 

The general decision criterion $\mu_{\bm{p}}$ allows for the appropriate handling of various settings, e.g., for the Laplace criterion, our optimization problem is given by
$$\min_{a \in A} \sum_{j \in [N]} \min_{\substack{\bm{x} \in \X(\FI^j)\\ \FS(\bm{x})=a(\FI^j)}} c(\FI^j,\bm{x})\ ,$$
whereas for a robust approach, it becomes
$$\min_{a \in A} \max_{j\in [N]} \min_{\substack{\bm{x} \in \X(\FI^j)\\ \FS(\bm{x})=a(\FI^j)}} c(\FI^j,\bm{x})\ .$$

Please note that the framework is designed to be highly general and applicable to a wide range of optimization problems. However, in the following sections, we focus on combinatorial optimization problems.

\subsection{Scope of the Framework}\label{subs::framework::scope}

To demonstrate the scope of our framework and clarify the introduced notation, we explore two example settings in detail. We also outline how our framework can be applied across various optimization domains, highlighting the broader applicability of our approach.

\paragraph{Knapsack.}
Let us first walk through the progressive expansion of the knapsack problem and how our framework can be applied to it. Consider a combinatorial knapsack problem with $n$ items and a budget $C$.
Given a profit vector $\bm{c}$ and a weight vector $\bm{w}$, we can identify an optimal knapsack solution using the following optimization problem:
\[
\max\left\{\sum_{i\in[n]} c_i x_i \text{ s.t. } \sum_{i\in[n]} w_i x_i \leq C,\; \bm{x} \in \{0,1\}^n\right\}.
\]
We now consider a situation with $N$ profit scenarios, that is, we assume that the instance feature vector $\FI$ corresponds to a vector of item profits. Let these be indexed by $j$. To leverage our framework, we assign each item to one of $F_S$ different categories. Let $I_f$ denote the set of items assigned to category $f\in[F_S]$. The meta-solution $\FS$ then consists of specifying the budget for each category, i.e., the $F_S$ solution features specify the amount $C_f$ of the budget spent on category $f\in[F_S]$. We can write the optimization problem for calculating such a budget as
\begin{align*}
\max_{\bm{x},\bm{C}} &\ \sum_{j \in [N]} \bm{c}_j^\top \bm{x}_j \\
\text{s.t.}\ &\sum_{f\in[F_S]} C_f \leq C\\
& \sum_{i \in I_f} w_i x_{ji} \leq C_f && \forall f\in[F_S],\ j\in [N]\\
& \bm{x}_j \in \{0,1\}^n && \forall j\in[N] \\
& C_f \geq 0 && \forall f\in[F_S] .
\end{align*}
In the above setting, we only obtain one single meta-solution (budget capacities for each category) to cover all scenarios. Note that the specific realization of the solution still has some leeway, as the specific selection of items is not prescribed by the meta-solution: it only tells us to use a budget of $C_f$ for items of category $f$ but does not prescribe which exact items to pick.
An even more flexible alternative would be to have multiple possible meta-solutions (though fewer than the number of scenarios) and choose one among them for any given scenario. 
We index this set of meta-solutions by $k\in[K]$.
We can then write the problem as 
\begin{align*}
\max_{\bm{x}, \bm{C}, \bm{\ell}} &\ \sum_{j \in [N]} \bm{c}_j^\top \bm{x}_j \\
\text{s.t.}\ &\sum_{f\in[F_S]} C_f^k \leq C && \forall k\in[K] \\
&\sum_{i \in I_f} w_i x_{ji} \leq C_f^k + M (1-\ell^k_j) && \forall f\in[F_S],\ j \in [N],\ \forall k \in [K]\\
& \sum_{k\in[K]} \ell^k_j = 1 && \forall j \in [N]\\
& \bm{x}_j \in \{0,1\}^n && \forall j\in[N] \\
& \ell^k_j \in \{0,1\} && \forall j\in[N],\ k\in[K] \\
& C^k_f \geq 0 && \forall f\in[F_S], \ k\in[K]\, ,
\end{align*}
where $C^k_f$ is the budget spend on category $f$ in case of meta-solution $k$.
In the above formulation, the meta-solution can be any arbitrary function depending on the instance features. 
To enhance interpretability, we require that the mapping of scenarios to meta-solutions, represented by the indicator variables $\ell^k_j$, be defined by a function that is easily comprehensible. One effective approach is to represent this function as a decision tree. Such a formulation can be expressed as
\begin{align*}
\max_{\bm{x}, \bm{C}, \bm{\ell}, \bm{\theta}} &\ \sum_{j \in [N]} \bm{c}_j^\top \bm{x}_j \\
\text{s.t.}\ &\sum_{f\in[F]} C_f^k \leq C && \forall k\in[K] \\
&\sum_{i \in I_f} w_i x_{ji} \leq C_f^k + M (1-\ell^k_j) && \forall f\in[F_S],\ j \in [N],\ \forall k \in [K]\\
& \bm{\ell}_j = f(\bm{c}_j, \bm{\theta}) && \forall j\in[N]\\
& \bm{x}_j \in \{0,1\}^n && \forall j\in[N] \\
& C^k_f \geq 0 && \forall f\in[F_S], \ k\in[K]\, ,
\end{align*}
where the goal is to find a function $f(\bm{c}_j, \bm{\theta})$ that always takes the form of a decision tree. The splits $\bm{\theta}$ must be optimized so that the function maps instance features (the profit vector) to a meta-solution through this decision tree while maximizing the average profit. Note that this formulation is based on the one presented in~\citep{bertsimas2017optimal}, but other formulations exist as well~\citep{aghaei2024strong}.

We can understand the final knapsack problem in terms of the elements introduced in Section~\ref{subs::framework::framework}.
Here, the set \(\mathcal{X}\) corresponds to the budget constraint of the knapsack problem, which in this example is not affected by the instance features. However, the constraints forming the set are modified to model the meta-solutions.
Each instance feature \(\bm{\phi}_I^j\) reflects a different cost vector for the objective. 
The knapsack problem, as modeled, is equivalent to assigning equal probabilities to all scenarios. 
The solution features \(\bm{\phi}_S\), thus correspond to a meta-solution, which is equivalent to a capacity vector \((C_1^k, \dots, C_{F_S}^k)\) with $\Phi_S\subseteq\mathbb{R}^{F_S}$.
Given a meta-solution and instance features, the function \(C(\FI,\bm{\phi}_S)\) from our framework calculates the optimal objective value obtained from selecting the best singular solution obeying the meta-solution on that instance.
The set \(A\) is the space of all decision trees that map instance features to a meta-solution. 
We parameterize this set by $\bm{\theta}$.
Finally, \(\mu_{\bm{p}}\) represents how the objective values of different instances are aggregated, which is by summing them together, which is equivalent to the Laplace criteria.

\paragraph{Shortest path.}

Assume we are given a graph $G=(V, E)$ representing a road network and nodes $s,t \in V$ representing a starting point and a destination. Furthermore, let us assume that traffic follows a pattern as shown in Figure~\ref{fig:traffic_scenarios}. We assume that there are four main traffic scenarios that can occur at different times of the day throughout the week. We want to provide a comprehensible optimization rule for selecting a route, depending on the time of day and the day of the week.
\begin{figure}[htb]
\centering
         \includegraphics[scale=.4]{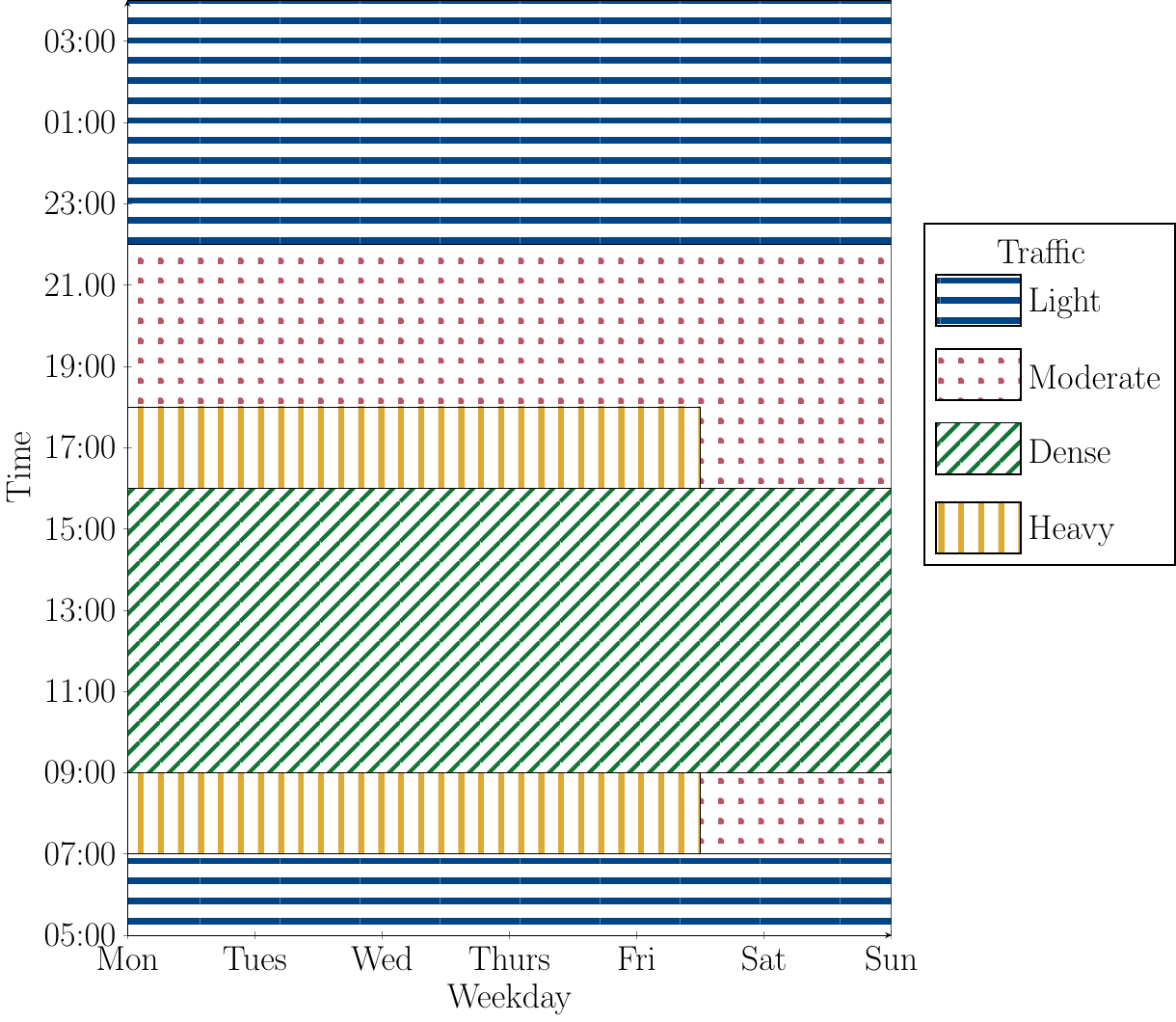}
         \caption{Traffic scenarios assumed in the shortest path example. The day of the week is depicted on the horizontal and the time of day on the vertical axis.\label{fig:traffic_scenarios}}
\end{figure}

To make our example more specific, we are using the graph from~\citet{goerigk_2025_15267992}, representing Chicago's street network, which we have divided into eleven districts (see gray boxes in Figure~\ref{fig::maps_scenarios}).
\begin{figure}
    \centering
    \begin{subfigure}[t]{0.24\textwidth}
         \centering
         \includegraphics[width=\textwidth]{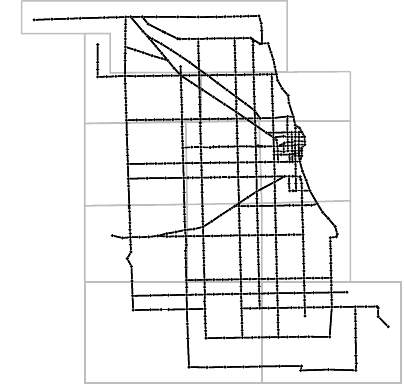}
         \caption{Light traffic.}
         \label{fig::training_data::blue}
    \end{subfigure}
    \begin{subfigure}[t]{0.24\textwidth}
         \centering
         \includegraphics[width=\textwidth]{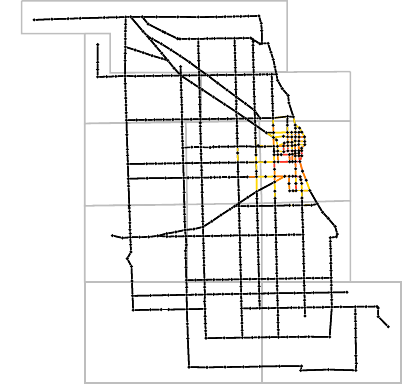}
         \caption{Moderate traffic.}
         \label{fig::training_data::red}
    \end{subfigure}   
    \begin{subfigure}[t]{0.24\textwidth}
         \centering
         \includegraphics[width=\textwidth]{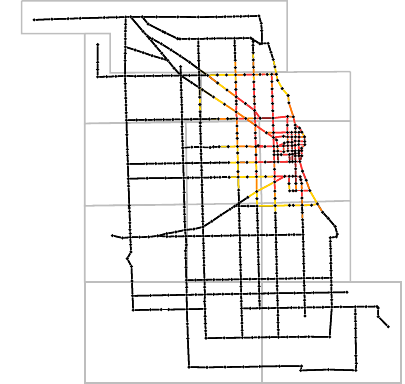}
         \caption{Dense traffic.}
         \label{fig::training_data::green}
    \end{subfigure}
    \begin{subfigure}[t]{0.24\textwidth}
         \centering
         \includegraphics[width=\textwidth]{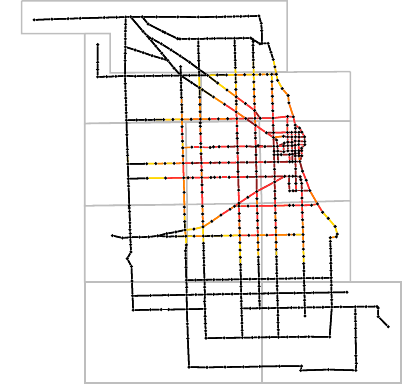}
         \caption{Heavy traffic.}
         \label{fig::training_data::black}
    \end{subfigure}   
    \caption{Traffic scenarios in the city of Chicago.\label{fig::maps_scenarios}}
    \label{fig::training_data}
\end{figure}

The edges are colored if the travel speed on them is slower than usual.  
The colors range from yellow (slightly reduced) to dark red (significantly slower) and provide information about the intensity of the speed reduction. 

For the task of traveling from the northwest to the southeast of the city, let us first consider solutions given by the solution-based interpretable optimization rules.
\begin{figure}
    \centering
    \begin{subfigure}[t]{0.4\textwidth}
         \centering
         \includegraphics[width=\textwidth]{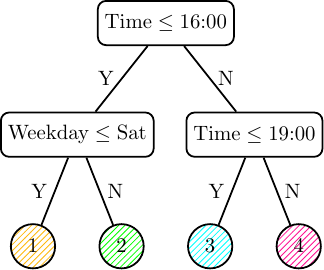}
         \caption{Decision Tree.}
         \label{fig::tree_mgmh::tree}
    \end{subfigure}
    \begin{subfigure}[t]{0.4\textwidth}
         \centering
         \includegraphics[width=\textwidth]{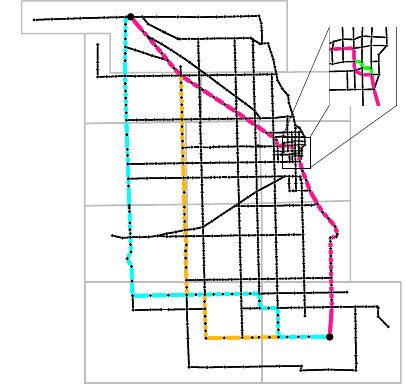}
         \caption{Respective $s$-$t$-paths.}
         \label{fig::tree_mgmh::sols}
    \end{subfigure}   
    \caption{Optimization rule obtained from using the solution-based framework for interpretable surrogates.}
    \label{fig::tree_mgmh}
\end{figure}
Figure \ref{fig::tree_mgmh} illustrates them through a binary decision tree with a maximum depth of two. In each of the four scenarios identified by the decision tree's splits, a specific $s$-$t$ path must be determined.

However, this approach may be too restrictive and diverges from how directions are typically explained in real life, where one would emphasize prominent landmarks, districts, streets, or intersections along the route, rather than detailing every road segment to be traveled. Therefore, we want to provide an optimization rule that uses solution features. In Figure \ref{fig::tree_art::tree}, another binary decision tree is provided which functions as an optimization rule. However, instead of providing $s$-$t$ paths in the leaves, we now provide meta-solutions (shown in Figure~\ref{fig::meta_paths}), i.e., we provide features that any solution should possess in the given scenario. In this specific case, we select as solution features the sequence of city districts that one should traverse to reach the destination.
\begin{figure}[htb]
         \centering
         \includegraphics[scale=1]{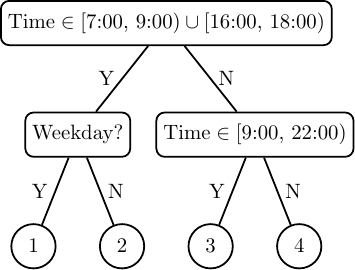}
         \caption{Decision tree. \label{fig::tree_art::tree}}

\end{figure}    
\begin{figure}[htb]
    \centering
    \begin{subfigure}[t]{0.3\textwidth}
         \centering
         \includegraphics[width=\textwidth]{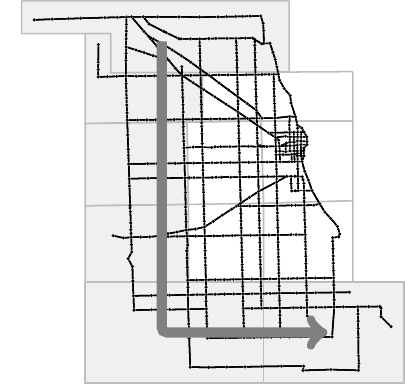}
         \caption{Meta-solution 1.}
         \label{fig::tree_art::s1}
    \end{subfigure}
    \begin{subfigure}[t]{0.3\textwidth}
         \centering
         \includegraphics[width=\textwidth]{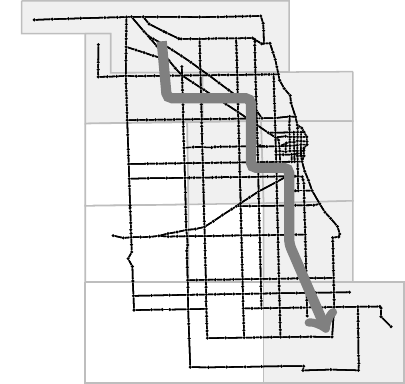}
         \caption{Meta-solution 2 and 4.}
         \label{fig::tree_art::s2}
    \end{subfigure}   
    \begin{subfigure}[t]{0.3\textwidth}
         \centering
         \includegraphics[width=\textwidth]{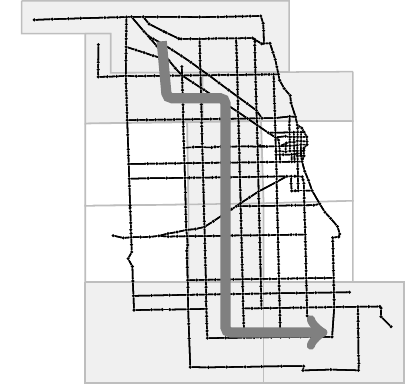}
         \caption{Meta-solution 3.}
         \label{fig::tree_art::s4}
    \end{subfigure}   
    \caption{Meta-solutions for the new framework. Districts shaded in gray have to be visited as per the order indicated by the bold gray arrows. Unshaded districts may not be used.\label{fig::meta_paths}}

\end{figure}
This way, we obtain a more comprehensible specification (specify a few features, rather than a lot of edges) and give the user more leeway in the implementation. In particular, any solution that follows the specified path through the districts is suitable. 

This example also highlights the importance of adequately choosing solution features: E.g., when arriving as a tourist at O'Hare International Airport and searching for the most convenient way to your hotel, a specific route would probably be more helpful than providing district names the tourist is unfamiliar with. In contrast, a courier likely has this kind of knowledge and providing landmarks would suffice in order to guide the route. Hence, it is evident that the solution features have to be carefully matched to the capabilities and expert knowledge of the user.

\paragraph{Other domains.}
We now exemplarily outline how our framework can be applied to various other optimization settings. Given that we have already established the benefits of using comprehensible optimization rules, we briefly discuss how to select instance features (which serve as input for the optimization rule) and solution features (which determine the meta-solution output by the rule).

In a lot-sizing setting, relevant instance features might include product demand, stock levels, production costs, backorder costs, and inventory costs. For a new scenario, the optimization rule could specify solution features such as whether the entire demand is produced in advance (binary indicator), the maximum production amount, the types of products being manufactured, or the overall stock level after production.

In the case of staff scheduling, instance features may include the time or cost of assignments, worker availability, and job demand. Solution features could involve the number of workers from specific groups assigned to each job group, the total number of workers assigned to a job, or binary indicators showing which worker groups are assigned to which job groups.

For the vehicle routing problem, instance features might include demand at nodes, travel costs, and nodes that need to be visited. Solution features could include the number of vehicles assigned to each district, the grouping of specific nodes within the same route, or the (maximum) number of nodes visited by all vehicles or by specific vehicles.

\section{Models}
\label{sec:models}

In this section, we discuss how we can obtain decision tree-based optimization rules using a mixed-integer programming (MIP) formulation and how it can be used to obtain the meta-solutions for knapsack and shortest path problems. Appendix~\ref{appendix:proofs} includes hardness proofs for two special cases of shortest path problems and also highlights the appropriateness of MIP as a modelling tool.

\subsection{Inner Tree Structure}
We start by introducing an MIP formulation for the generation of decision trees which is part of the models in the following sections. The formulation presented is adapted from~\citet{GOERIGK20231312}.
It can be used to generate univariate binary decision trees of fixed depth $Q$.
The presented constraints only govern the inner structure of the tree, i.e., the branching decisions and the assignment of data points to the leaves. 
We also need additional feasibility constraints for the solutions assigned to the tree leaves as well as an objective function that captures the selected decision criteria. Various extensions of this model can be found in~the literature, which are compatible with the formulations presented in this paper.

For simplicity, we present the formulation for symmetric decision trees of depth $Q$.
Such trees require that all queries at any given depth $q \in [Q]$ of the tree be the same. 
For every given data point $j$ and every leaf $k$, a binary variable $\ell_{j}^{k}$ indicates if the data point $j$ reaches leaf $k$ following the tree's queries. 
Each query $q\in[Q]$ is modeled using a binary variable $d_f^q$ which indicates whether feature $f$ is queried at depth $q$.
A continuous variable $b^q$ specifies the threshold for the evaluation. 
It can be bounded by the minimum and the maximum of all feature values, see~\eqref{Eq::TreeStructure::DomainB}. 
The Constraints~\eqref{Eq::TreeStructure::OneSolutionPerScenario} ensure that every data point from the training set is assigned to exactly one leaf of the tree. 
To ensure that every query in the tree chooses a feature, we enforce the Constraints~\eqref{Eq::TreeStructure::Explainable1}.
Constraints~\eqref{Eq::TreeStructure::Threshold1} and~\eqref{Eq::TreeStructure::Threshold2} control the assignment of data points to leaves while taking into account the relevant tree queries. A data point $j$ can reach the left child of query $q$ if and only if the value of the instance feature $f$ of data point $j$ is smaller than or equal to the chosen threshold for this query, see~\eqref{Eq::TreeStructure::Threshold1}. 
Similarly, if the value is greater than the threshold, it can reach the right child, see \eqref{Eq::TreeStructure::Threshold2}. 
To this end, we make use of the index sets $S_q$ to control the assignment of left/right paths to leaves. For a given query node $q$, $S_q \subseteq [K]$ contains the leaves which can be reached by traversing to the right child of $q$. E.g. for $Q := 2$ and therefore $K=4$ we have $S_1 = \{3,4\}, S_2 =  \{2,4\}$.

Note that other formulations for decision trees can also be used in the upcoming models as long as they include the variables $\ell^k_j$, which indicate a mapping of a scenario $j$ to a leaf $k$.

\begin{subequations}
\label{Eq::TreeStructure}
\begin{align}
T(\FI^1,\ldots,\FI^N)= \Big\{ & \bm{\ell}\in\{0,1\}^{K\times N} \text{ s.t. } \exists \bm{d},\bm{b}: \\
&\sum_{k \in [K]} \ell^k_j=1 && \forall j \in [N]
&\label{Eq::TreeStructure::OneSolutionPerScenario}\\
&\sum_{f\in [F_I]} d_f^q = 1 &&\forall q \in[Q] &\label{Eq::TreeStructure::Explainable1}\\
&  \sum_{f\in[F_I]} \phi^{jf}_I d_f^q \le b^q 
+  M\sum_{k\in S_q}\ell^k_j && \forall q \in[Q],\ j\in[N] & \label{Eq::TreeStructure::Threshold1}\\
& b^q+\epsilon -M(1-\sum_{k\in S_q} \ell^k_j)  \le \sum_{f\in[F_I]} \phi^{jf}_I d_f^q && \forall q\in[Q],\ j\in[N] & \label{Eq::TreeStructure::Threshold2}\\
&b^q\in [\min_{j\in[N], f\in[F_I]} \phi^{jf}_I-\epsilon,\max_{j\in[N], f\in[F_I]} \phi^{jf}_I] && \forall q\in [Q] & \label{Eq::TreeStructure::DomainB}\\
&d_f^q \in \{0,1\}&&\forall f \in [F_I],\ q\in[Q]& \label{Eq::TreeStructure::Explainable2}
& \Big\}.
\end{align}
\end{subequations}

\subsection{Interpretable Surrogates for the Knapsack Problem}\label{subs::mod::ks}

We return to the knapsack problem from Section~\ref{subs::framework::scope}.
In this setting, the set of items can be categorized into several subsets. For example, when deciding about in-house budget allocation, the relevant projects can be grouped by their purpose\dash likewise to \citet{chao2008theoretical}. There could be a set of projects affiliated with marketing, production, or research and development. We assume their investment costs to be known, but their pay-off are in the future and hence uncertain. Let us say the optimization rule $a$ takes in the features of the instance (e.g., cost, expected pay-off, demand, macroeconomic factors) and via a small decision tree yields the meta-solutions, i.e., it specifies the budgets for the different categories. For example, in times of high demand, the proportion of budget allocated to production-affiliated projects can be increased. In contrast, whilst facing a low demand, it is desirable to shift a higher share of the total budget towards marketing efforts. Whilst these decision rules are created in agreement with the strategic management of the company, choosing the specific projects to execute is the responsibility of an operational-level manager facing the observed realization.

We assume that the knapsack capacity $C\in\mathbb{R}$ as well as the item weights $w_i \in \mathbb{R}$ are fixed.
Using the Laplace decision criterion and assuming a set of $N$ historic observations, this can be achieved via solving the following optimization problem.

\begin{subequations}
\label{Eq::Knapsack}
\begin{align}
\max \ & \color{black} \sum_{j\in[N]} \color{black} \bm{c}_j^\top \bm{x}_j\\
\textnormal{s.t.}\ & \bm{\ell} \in T(\FI^1,\ldots,\FI^N) \label{Eq::Knapsack::Tree}\\
& \sum_{i\in I_f} w_i x_{ji} \leq C^k_f +M(1- \ell_j^k)&& \forall f \in [F_I],\ k \in [K],\ j \in[N]\label{Eq::Knapsack::xInXf}\\
& \sum_{f \in F} C^k_f \leq C&& \forall k \in [K]\label{Eq::Knapsack::XfsYieldX}\\
& C^k_f \in \mathbb{R}_+&&\forall k \in [K],\ f \in[F_I]\\
& x_{ji} \in \{0,1\} && \forall j \in[N], i \in [n].
\end{align}
\end{subequations}
In comparison to the model we provided in Section~\ref{subs::framework::scope}, we use Constraint~\eqref{Eq::Knapsack::Tree} to model the decision tree mapping from training scenario $j\in[N]$ to meta-solution $k\in[K]$.

\subsection{Interpretable Surrogates for the Shortest Path Problem}\label{subs::mod::sp}
In this section, we describe the formalization of the shortest path approach sketched in Section~\ref{subs::framework::scope}. 
For the shortest path problem, each scenario or instance feature corresponds to a different cost vector. 
The rest of the problem, such as the feasible region and the source and target, remains the same across all the instances. 
As described in Section~\ref{subs::framework::scope}, each node is associated with a district, and the solution features represent a path between the source and target, consisting of a series of districts. Similar to the knapsack formulation, we also use the Laplace criterion in the shortest path objective. 

In contrast to the knapsack model, formalizing the shortest path problem is not so straightforward. The difficulty lies in describing the solution space in the formulation. We want to ensure that every feasible $s$-$t$-path has to follow its given meta-solution. That means that the path must visit the districts (i.e., at least one node of the district) specified in the meta-solution (also referred to as the meta-path) in the specified order. 
We denote the district containing any node $v$ by $\feat(v)$.

For the implementation, it is necessary to limit the number of districts that are part of the sequence representing a meta-solution. This limit is represented by the parameter~$\Delta$. Since any node should not be visited more than once, a trivial upper bound for $\Delta$ is $|V|$. Note that if $\Delta$ is chosen too small, the problem of finding a meta-solution can become infeasible.

Given a graph $G=(E, V)$, we can construct an auxiliary graph $G^\prime = (V^\prime, E^\prime)$ to formulate the meta-solution problem as an IP. 
The set of nodes is given by 
$$V^\prime = \bigcup_{\delta \in [\Delta]} V_\delta,$$
where $V_\delta$ represents the $\delta$th copy of $V$. For $\delta \in [\Delta]$ let $v_\delta$ be the node of $V_\delta$ that is a copy of $v \in V$. The sets $V_\delta$ are, in the following, referred to as \textit{layers}. The set of edges is given by
$$E^\prime = \left(\bigcup_{(u,w) \in E: \feat(u) = \feat(w)}\; \bigcup_{\delta \in [\Delta]} \{(u_\delta, w_\delta) \}\right) \cup \left(\bigcup_{(u,w) \in E: \feat(u) \neq \feat(w)}\; \bigcup_{\delta \in [\Delta-1]} \{(u_\delta, w_{\delta+1})\}\right),$$
i.e., for every edge in $E$ which begins and ends at nodes that belong to the same district, an edge connecting the corresponding nodes in every layer in $G^\prime$ is generated. For every edge in $E$ that begins and ends in nodes that are located in different districts, an edge connecting the corresponding nodes in consecutive layers in $G^\prime$ is created. Recall that a meta-solution refers to the sequence of districts visited in $G$. For the corresponding path in $G^\prime$, only nodes from the same district are visited within each layer. Thus, the meta-solution can be extracted from $G^\prime$, with the solution features representing the districts visited in each layer of $G^\prime$.

 We now consider the problem of finding the shortest path from $s_0$ to some $t_\delta$ in $G^\prime$ with the restriction that for every $v \in V$ we are only allowed to visit at most one of its copies $v_\delta \in V^\prime$. The necessity of explicitly enforcing this condition is proven in Appendix~\ref{appendix:cycles}. Further, we only allow an edge $(u_\delta, w_{\delta + 1}) \in E^\prime$ connecting different layers to be used if the corresponding node $u \in V$ is located in the district which is assigned to layer $\delta$ and the corresponding node $w \in V$  is located in the district which is assigned to layer $\delta +1$. Given a meta-solution and an $s_0$-$t_\delta$-path for some $\delta \in \Delta$, both represented in $G^\prime$, we can obtain an $s$-$t$-path in $G$ which follows the meta-solution.

Figure \ref{fig::layer} shows an example graph $G$ and the corresponding graph $G^\prime$ with $\Delta := 6$. Colors are used to represent the district containing the nodes. Assume that we are given a meta-solution represented by the sequence (\textit{red-green-yellow-green-pink}). Possible resulting $s$-$t$-paths in $G$ are $(s$-$1$-$3$-$4$-$5$-$t)$ and $(s$-$3$-$4$-$5$-$t)$, which correspond to the two paths in $G^\prime$. It can be seen that it is ensured that at least one node in every district listed in the meta-solution has to be visited. The construction also ensures that the order specified by the sequence is adhered to.
Additionally, it is not necessary to assign a district to every layer, as long as the sequence of assigned districts allows for the reachability of a node $t_\delta$  for some $\delta \in [\Delta]$.

\begin{figure}[h!]
    \centering
    \begin{subfigure}[t]{0.36\textwidth}
         \centering
         \includegraphics[width=\textwidth]{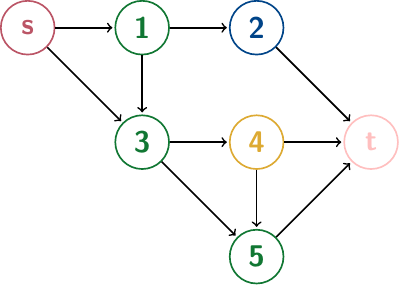}
         \caption{$G$.}
         \label{fig::layer::g}
    \end{subfigure}
    \hspace{.3cm}
    \begin{subfigure}[t]{0.5\textwidth}
         \centering
         \includegraphics[width=\textwidth]{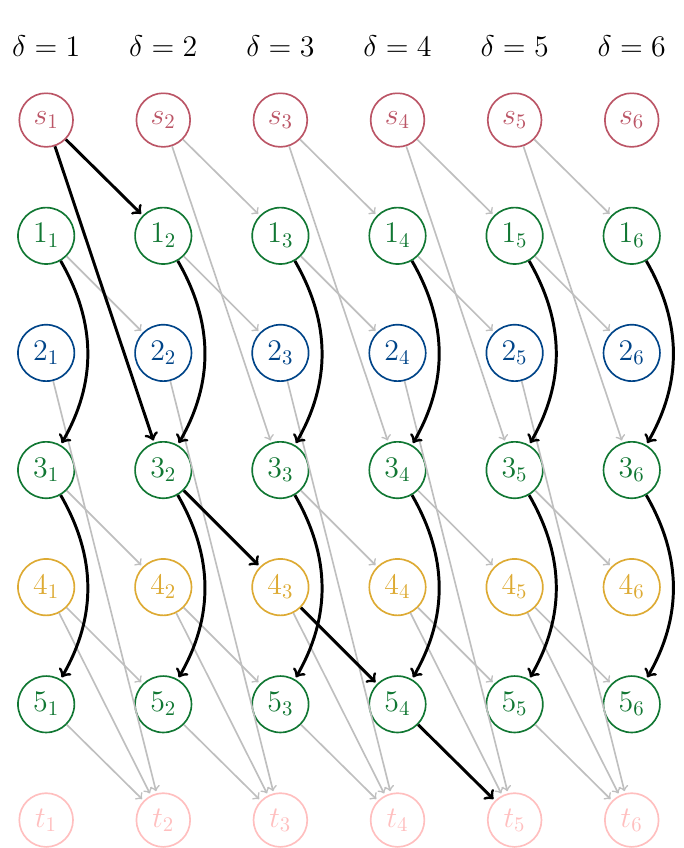}
         \caption{$G^\prime$.}
         \label{fig::layer::gprime}
    \end{subfigure}   
    \caption{$G$ and corresponding $G^\prime$ with an exemplary meta-solution indicated. Edges in $G^\prime$ highlighted in black represent edges that can be used for finding a path from $s_1$ to some $t_\delta$ within the solution space defined by this meta-solution. Grey edges may not be used.}
    \label{fig::layer}
\end{figure}

We use the following sets to formalize this idea in an MIP formulation:
\begin{itemize}
     \item[] \(E^{v\delta}_{-\rightarrow}  = \{ (u_{\delta-1}, v_{\delta}) \in E^\prime \}:\) edges coming into node $v$ in layer $\delta$ from layer $\delta-1$,
     \item[] \(E^{v\delta}_{-\downarrow}  = \{ (u_{\delta}, v_{\delta}) \in E^\prime \}:\) edges coming into node $v$ from within layer $\delta$,
     \item[] \(E^{v\delta}_{+}  = \{ (v_{\delta}, w_{\delta}) \in E^\prime \} \cup \{ (v_{\delta}, w_{\delta+1}) \in E^\prime \}:\) edges going out from node $v$ within layer $\delta$ and to layer $\delta+1$,
     \item[] \(E_{\rightarrow} = \cup_{v \in V} \cup_{\delta \in \Delta} E_{-\rightarrow}^{v\delta}:\) the set of all edges coming into any layer from previous layers.
\end{itemize}

For modeling, we consider edges as belonging to particular layers. 
Specifically, edges within a layer  $(u_\delta, w_\delta)$ and edges which leave it ($u_\delta, w_{\delta+1}$) are considered to belong to the layer $\delta$. For the calculation of the objective value and to ensure that every meta-solution offers the possibility of finding an $s$-$t$-path, we calculate a solution $\bm{x}^\delta_j$ for every scenario $j$. 
The binary variable $x_{ej}^\delta$ is equal to \(1\) if the edge $e \in \{ (u_\delta, w_\delta) \cup (u_\delta, w_{\delta +1}) \in E^\prime : \delta \in [\Delta]\}$ belonging to layer $\delta$ is part of the solution of scenario $j$. 
Constraints~\eqref{Eq::Sp::Source} to \eqref{Eq::Sp::Sink} represent flow conservation constraints, ensuring that $\bm{x}^\delta_j$ represents a valid $s_0$-$t_\delta$-path for some $\delta \in [\Delta]$ in $G^\prime$. Since every node $v\in V$ has multiple representatives in $V^\prime$ it is necessary to ensure that a path found in $G^\prime$ does not contain cycles in $G$. Constraint~\eqref{Eq::Sp::NodeMaxOnce} ensures that at most one representative of any node $v\in V$ can be part of a path in $G^\prime$. 

\small
\begin{subequations}
\label{prob:meta_sp}
\begin{align}
\min \quad & \sum_{j \in [N]} \sum_{\delta \in \Delta} \bm{c}_{j}^\top \bm{x}_{j}^{\delta} \label{eq:ob}\\
\textnormal{s.t.}\ & \bm{\ell} \in T(\FI^1,\ldots,\FI^N) && \label{Eq::Sp::Tree}\\
\cline{1-4}
& \sum_{e \in E_{-\rightarrow}^{s0} \cup E_{-\downarrow}^{s0}} x_{je}^{0} - \sum_{e \in E_{+}^{s0}} x_{je}^{0}  = -1 && \forall j \in [N] \label{Eq::Sp::Source}\\
& \sum_{e \in E_{-\rightarrow}^{v\delta}} x_{je}^{\delta-1} + \sum_{e \in E_{-\downarrow}^{v\delta}} x_{je}^{\delta} - \sum_{e \in E_{+}^{v\delta}} x_{je}^{\delta} = 0 && \forall j \in [N], \delta \in [\Delta], v \in V \setminus \{ s,t \} \label{Eq::Sp::Flow} \\
& \sum_{\delta \in \Delta} \sum_{e \in E_{-\rightarrow}^{t\delta} \cup E_{-\downarrow}^{t\delta}} x_{je}^{\delta} = 1 && \forall j \in [N] \label{Eq::Sp::Sink}\\
\cline{1-4}
& \sum_{\delta \in \Delta} \sum_{e \in E_{-\rightarrow}^{v\delta} \cup E_{-\downarrow}^{v\delta}} x_{je}^{\delta} \leq 1 && \forall j \in [N], v \in V \label{Eq::Sp::NodeMaxOnce} \\
& \sum_{f \in [F_S]} y^{\delta fk} \leq 1 && \forall \delta \in [\Delta], k \in [K]  \label{Eq::Sp::MaxOneF} \\
& x_{je}^{\delta} \leq \frac{1}{2} (y^{\delta \feat(u)k} + y^{(\delta+1)\feat(v) k})+(1-\ell_{j}^{k}) && \forall \delta \in [\Delta],(u,v) \in  E_{\rightarrow}, j \in [N], k \in [K] \label{Eq::Sp::LinkLayer}\\
& x_{j,e}^{\delta-1} \leq (1-\sum_{f \in [F_S]} y^{(\delta+1)fk}) +(1-\ell_{j}^{k}) && \forall \delta \in [\Delta-1], k \in [K], j \in [N], e \in E_{-\rightarrow}^{t\delta} \label{Eq::Sp::SinkLast1} \\
& x_{j,e}^{\delta} \leq (1-\sum_{f \in [F_S]} y^{(\delta+1)fk}) +(1-\ell_{j}^{k}) && \forall \delta \in [\Delta-1], k \in [K], j \in [N], e \in E_{-\downarrow}^{t\delta} \label{Eq::Sp::SinkLast2} \\
\cline{1-4}
& x^{\delta}_{je} \in \{ 0,1\} && \forall e \in E, \delta \in [\Delta], j \in [N] \\
& y^{\delta fk} \in \{ 0,1\} && \forall \delta \in [\Delta], f \in [F_S], k \in [K].
\end{align}
\end{subequations}

It is also essential to couple the district/layer assignment to the edges traversing between the layers. To control these edges, we use the binary variables $y^{\delta fk}$. Edges within one layer are not bounded by them. When set to one, $y^{\delta fk}$ indicates that district $f$ is assigned to layer $\delta$ in leaf $k$. Constraints~\eqref{Eq::Sp::MaxOneF} ensure that at most one feature is assigned to every layer $\delta$ in every leaf $k$. The Constraints~\eqref{Eq::Sp::LinkLayer} restrict the usage of edges between layers for every scenario that is assigned to leaf $k$. An edge $(u_{\delta},w_{\delta+1})$ connecting two consecutive layers can only be used if $y^{\delta fk} =1 : f = \feat(u)$ and $y^{(\delta+1) fk} = 1 : f=\feat(w)$.
Those edges can only be used if the districts assigned to the start and end layers match the features of the start and end nodes of that edge. Constraints~\eqref{Eq::Sp::SinkLast1} and \eqref{Eq::Sp::SinkLast2} ensure that the sink nodes $t_\delta$ can only be visited in layer $\Delta$ or a layer $\delta \in[\Delta-1]$ if the subsequent layer $\delta +1$ has no district assigned, indicating the end of the solution sequence.

Note that if there are leaves that have no assigned scenarios, the formulation does not enforce it to contain a feasible solution. In practice, if such a case happens, the tree can either be pruned in post-processing, or an arbitrary feasible meta-solution can be placed in this leaf.

\normalsize

\section{Heuristics}
\label{sec:heuristics}

While the integer programs constructed in the previous sections can be solved using off-the-shelf solvers, they have limited scalability. The primary alternative to address this issue is to use heuristics to solve the problems.  In this section, we discuss different heuristics for that purpose.
The performance of these heuristics is addressed as a part of the numerical experiments. 

\subsection{Learning Heuristic}\label{subs::heu::learn}
In this heuristic, we first generate training data by solving the underlying optimization problem for each of the $N$ available candidate scenarios. 
We then extract the meta-solutions corresponding to these $N$ optimal solutions. Of these, at most $N$ different meta-solutions, we choose the $K$ best, which are then used to find a classification tree. Algorithm~\ref{alg:sklearn_heuristic} describes the heuristic in detail for minimization problems.
 
\begin{algorithm}
\begin{algorithmic}[1]
\ForAll{$j \in [N]$} \label{heu:l1}
\State $\bm{x}^j \gets \min_{\bm{x} \in \mathcal{X}(\FI^j)} c(\FI^j,\bm{x})$ \label{heu:l2}
\ForAll{$i \in [N]$} \label{heu:l3}
\State $C^*_{ij} \gets \min_{\bm{x} \in \mathcal{X}(\FI^i), \ \FS(\bm{x}) = \FS(\bm{x}^j)} c(\FI^i,\bm{x})$ \label{heu:l4}
\EndFor
\EndFor
\State $(\bm{\mu}, \bm{\lambda}) \gets$ solve Problem~\eqref{Eq::SKIP} with $C^*, K$ \label{heu:l5}
\ForAll{$j \in [N]$} \label{heu:l6}
\State $\ell_j \gets $ index $ i \in [N] \ s.t. \ \mu_{ij} = 1$ \label{heu:l7}
\EndFor
\State \Return classification tree on $\{ (\FI^j, \ell_j) : j \in [N]\}$ \label{heu:l8}
\end{algorithmic}
\caption{\label{alg:sklearn_heuristic} Heuristic algorithm to learn and provide the best meta-solutions.}
\end{algorithm}

More specifically, in Algorithm~\ref{alg:sklearn_heuristic}, we first solve the nominal problem for each scenario $j$ (line~\ref{heu:l2}) to find solution $\bm{x}^j$. Extracting its features, there is a corresponding meta-solution. We then evaluate this meta-solution for each scenario to find its objective value (line~\ref{heu:l4}). If a problem is infeasible, we set $C^*_{ij}=\infty$. Note that we may obtain the same solution $\bm{x}^j$ for multiple scenarios $j\in[N]$, in which case some of the evaluation loops in line~\ref{heu:l3}-\ref{heu:l4} can be skipped.

We then solve an integer program to choose $K$ of the meta-solutions obtained this way. This program is as follows:
\begin{equation}
\begin{aligned}
\text{min} & \sum_{i \in [N]} \sum_{j \in [N]} C_{ij}^* \mu_{ij} &\\
\text{s.t.}\ & \sum_{i \in [N]} \mu_{ij} = 1 && \forall j \in [N]\\
& \mu_{ij} \leq \lambda_i               && \forall i, j \in [N] & \\
& \sum_{i \in [N]} \lambda_i \leq K && &\\
& \mu_{ij} \in \{ 0,1\}                 && \forall i, j \in [N] &\\
& \lambda_i \in \{ 0,1\}                && \forall i \in [N]. &
\end{aligned}
\label{Eq::SKIP}
\end{equation}
Variables $\lambda_i$ are used to determine which of the $N$ meta-solutions are chosen, while the objective function is used to minimize the cost of assigning these meta-solutions to the $N$ scenarios.

Having obtained a choice of $K$ meta-solutions this way, we still need to construct a decision tree. For this purpose, we identify the best meta-solution per scenario (line~\ref{heu:l7}) and train a standard classification tree with limited depth, e.g., using scikit-learn's implementation of the CART algorithm.

\subsection{M2M Heuristic}\label{subs::heu::mgmh}

A second heuristic that we use is based on the approach presented in \citet{GOERIGK20231312}, where a MIP formulation and a greedy heuristic were presented with the aim of creating a decision tree and identifying solutions (not meta-solutions) for the scenarios assigned to its leaves.
The micro-to-macro (M2M) heuristic first uses the MIP or the heuristic from the solution-based approach to find a decision tree that can select solutions based on scenarios.
It then replaces the solutions at the leaves with the respective meta-solution. For all experiments presented in this paper, the MIP formulation was used to obtain the decision trees.

\subsection{Best Single Solutions}\label{subs::heu::single}
In this section, we evaluate the consequences of using only a single solution to optimize performance over all the scenarios. 
We consider the case of both the best meta-solution and the best micro-solution. 

\paragraph{Best single meta-solution.}
In this case, we solve an optimization problem to identify the best single meta-solution to optimize the performance over all the scenarios using the decision criteria:
$$\min_{\bm{\phi}_S \in \Phi_S}\mu_{\bm{p}} \left( C(\FI^1,\bm{\phi}_S),\ldots, C(\FI^N,\bm{\phi}_S) \right).$$
The resulting optimization problem avoids finding an optimization rule and only provides one meta-solution to be applied in every scenario. Such a meta-solution can be obtained by solving the MIP formulations \ref{Eq::Knapsack} and \ref{prob:meta_sp} for $K=1$, which allows removing the variables and constraints regarding the tree structure as well as index $k$.

\paragraph{Best single micro-solution.}
In this case, we even drop the notion of a meta-solution and are only interested in finding a single solution that optimizes the decision criteria:

$$\min_{\bm{x} \in \X}\mu_{\bm{p}} \left( c(\FI^1,\bm{x}),\ldots, c(\FI^N,\bm{x}) \right).$$
Note that this is only applicable if $\cap_{i\in[N]} \X(\FI^i) \neq \emptyset$, i.e., the intersection of the optimization domains of all scenarios is nonempty.

\section{Experiments}
\label{sec:experiments}

This section examines the performance of our approach through numerical experiments on knapsack and shortest path problems.

\subsection{Setting}\label{sub::ex::setting}

To evaluate the performance of our approach and to analyze the trade-off between interpretability and potential objective value, experiments were conducted for two underlying optimization problems. Results for experiments based on knapsack problems are described in Section~\ref{subs::ex::knap}. Results for shortest path problems are described in Section~\ref{subs::ex::sp}. A summary of the results can be found in Section~\ref{subs::ex::sum}.

The methods used are presented in Table~\ref{tab::methods}. To ensure interpretability, we restricted ourselves to trees with two and four leaves. In addition to the tree-based methods, we also compute the single best meta-solution (referred to as \texttt{META1}) and the single best micro-solution (referred to as \texttt{MICRO1}) as described in Section~\ref{subs::heu::single}.
Since here all scenarios are mapped to the same (meta-)solution, both of them can be considered as being the most interpretable of our methods. In contrast, solving every scenario to optimality (\texttt{OPT}) using a black-box solver instead of any mapping represents a non-interpretable method.

\begin{table}[h]
     \centering
     \begin{tabular}{lll}
     \textbf{Method name} & \textbf{Description} & \textbf{Reference} \\ \hline
     \texttt{MIPx}        & MIP formulation                     &  Section~\ref{subs::mod::ks} \& \ref{subs::mod::sp}                  \\
     \texttt{LHx}         & Learning heuristic                  &  Section~\ref{subs::heu::learn}                  \\ 
     \texttt{M2Mx}        & Micro-to-macro heuristic                     &  Section~\ref{subs::heu::mgmh}                 \\
     \texttt{MICROx}      & Solution-based approach                     &  \cite{GOERIGK20231312}                  \\
     \texttt{META1}       &  Single best meta-solution                    &   Section~\ref{subs::heu::single}                 \\
     \texttt{MICRO1}      &  Single best micro-solution                    &   Section~\ref{subs::heu::single}                 \\
     \texttt{OPT} & Optimal solution for each scenario& 
     \end{tabular}
     \caption{List of methods evaluated. We denote the different models as \texttt{NAMEx} where \texttt{NAME} indicates the method and \texttt{x} indicates the number of leaves of the decision trees.}
     \label{tab::methods}
\end{table}

In addition to the experiments using all methods, we also conducted experiments using larger instances. In the latter, the \texttt{MIPx} methods were not used due to their intractability. To generate trees with two leaves, we used the formulation for symmetric trees~\eqref{Eq::TreeStructure}, and for trees with four leaves, we used a formulation for asymmetric trees since its use has proven to be slightly faster.

For both problem types, the values for the vectors $\bm{c}$, representing the costs of edges respectively items, were generated using a variation of the approach from~\citet{GOERIGK20231312}. Three distributions were generated, which we used as base scenarios. The costs of each edge or item were uniformly sampled from one of these distributions, which was chosen randomly with equal probabilities. To generate test sets, we sampled scenarios in the same way. Each test set contains 100 scenarios. More information about the generation of the problem-specific parameters of the instances is included in the respective subsections.

For a given solution $\bm{x}$, its objective value is scaled in the following figures using the formula
$$Obj^{scaled}(\bm{x}) = \frac{Obj(\bm{x}) - Obj(\bm{x}^{\texttt{MICRO1}})}{Obj(\bm{x}^{\texttt{OPT}})- Obj(\bm{x}^{\texttt{MICRO1}})}.$$
A value of one thus represents the objective that can be achieved if each instance is solved by itself to optimality, and is, therefore, an upper bound for all methods used. A higher value thus indicates, even for minimization problems (i.e., in our experiments shortest path), a better solution.  The gap between the value of the scaled objective and $1.0$ is an approximation of the price paid to obtain an interpretable solution instead of the true optimal solution and thus reflects the \emph{cost of interpretability}. Note that the cost of interpretability arises not only from using a surrogate model but also from factors such as the quality of the implemented solution within a meta-solution and the lack of training data.

To evaluate the generated optimization rules, it is necessary to identify the corresponding leaf for a given scenario and compute a feasible micro-solution based on the meta-solution found in that leaf. All our experiments are based on the assumption that the features have been selected reasonably, which allows a stakeholder to practically work with the given meta-solutions. Therefore, we simulate that the user can find an optimal solution for a combination of meta-solution and instance. The resulting optimization problem of finding a micro-solution for the respective meta-solution was thus always solved to optimality. 

We performed 100 runs for each data point presented in the following. To ensure reproducibility, the seeds for generating the instances were fixed. The running times presented include both the time required to solve and construct the problem. We implemented all methods and generated the data using Python version 3.11. As part of the implementation of the learning heuristic (\texttt{LHx}), we used the function \textit{DecisionTreeClassifier} from the scikit-learn library \citep{scikit-learn} for generating the classification trees. We also used the Networkx library \citep{networkx} for storing and handling graph structures. All (M)IP formulations were solved using gurobipy and Gurobi version 11 \citep{gurobi}. Its internal time limit was set to 900 seconds. The experiments presented in Sections~\ref{subs::ex::knap} to~\ref{subs::ex::sp::sizefeat} were conducted on a cluster of machines with 2.6-3.3 GHz Intel Xeon CPUs with 4 cores and 12 GB of memory. For the examples illustrated in Section~\ref{subs::ex::sp::chicago}, a machine with an Apple M2 Pro chip with ten cores and 16 GB of memory was used. The code and the generated data are available on GitHub.

\subsection{Knapsack Problems}\label{subs::ex::knap}
In this section, we present computational experiments using our framework for the knapsack problem. We place particular focus on experiments where the dimension of solution features $F_S$ was varied. The results of further extensive experiments, where the number of items $n$ and the number of used training scenarios $N$ were varied can be found in the Electronic Companion.

All experiments were conducted using artificially generated data. The weights $w_i$ were uniformly sampled from the interval $[0.1,10]$. For our experiments we considered a fixed budget $C$ which was set to $\frac{1}{2} \sum_{i \in [n]} w_i$. For formulating the meta-solutions, the items were grouped as evenly as possible in $F_S$ sets. As described in Section \ref{subs::mod::ks}, the budgets $C^k_f$, which are chosen as part of a meta-solution, do not necessarily sum up to the budget given as part of the instance $C$. Therefore, those were scaled before evaluation such that $\sum_{f \in [F_S]} C^k_f = C$ for all $k \in [K]$ while retaining their ratios.

For the illustrated experiments, we set the number of training scenarios to ten, the number of items to 16, and varied the number of features. The results are presented in Figures \ref{fig::ks::feat-all-tr}, \ref{fig::ks::feat-all-te} and \ref{fig::ks::feat-all-ti}. These experiments include two noteworthy special cases: Firstly, the case where the number of features is set to one. In this case, every given solution space is equal to the set of feasible solutions of the knapsack problem. Due to our assumption that the user can find an optimal solution for a given solution space, the methods \texttt{M2Mx}, \texttt{LHx} and \texttt{MIPx} result in a normalized objective between zero and one for the training as well as the test data. The second case includes the experiments where the number of features was set equal to the number of items. A given meta-solution, therefore, consists of individual budgets for every single item and consequently represents a micro-solution for the knapsack problem. This is reflected in the results to the extent that \texttt{MIPx}, \texttt{MICROx}, and \texttt{M2Mx} have in this case the same objective on the training data. As expected, the gap between the benchmark and our methods closes with an increasing number of features i.e., a competent user can benefit from a rougher description of the solution space, which results in a higher degree of freedom. On the other hand, for an individual who is less familiar with the problem domain, an increasing degree of freedom (i.e., small value $F_S$) could lead to worse solutions.

\begin{figure}[htbp]
    \centering
    \begin{subfigure}[t]{0.49\textwidth}
         \centering
         \includegraphics[width=1\linewidth]{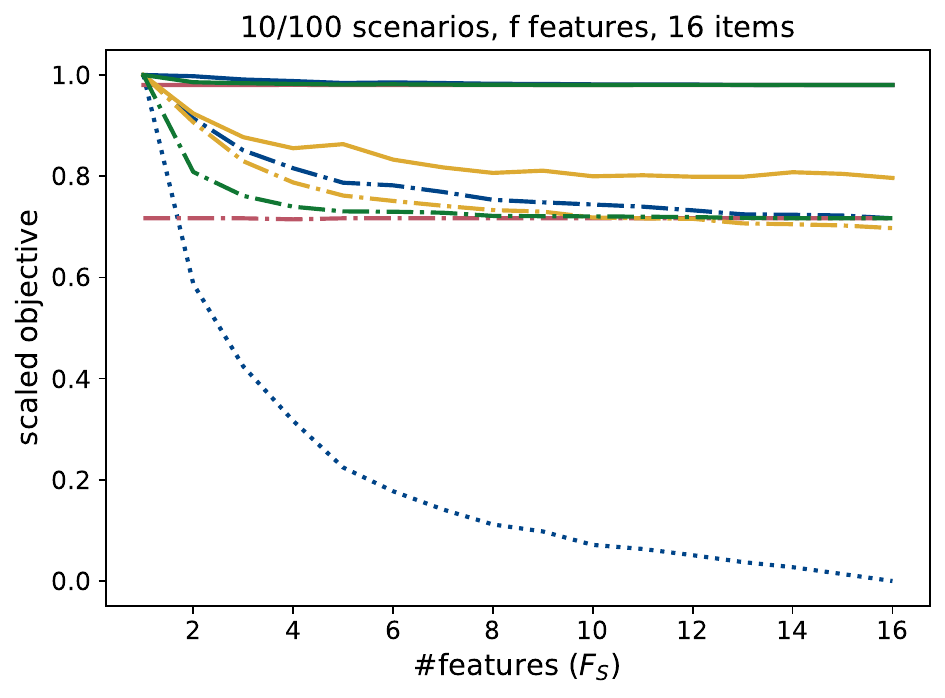}
         \caption{objective training/ \#features.}
         \label{fig::ks::feat-all-tr}
    \end{subfigure}
    \begin{subfigure}[t]{0.49\textwidth}
         \centering
         \includegraphics[width=1\linewidth]{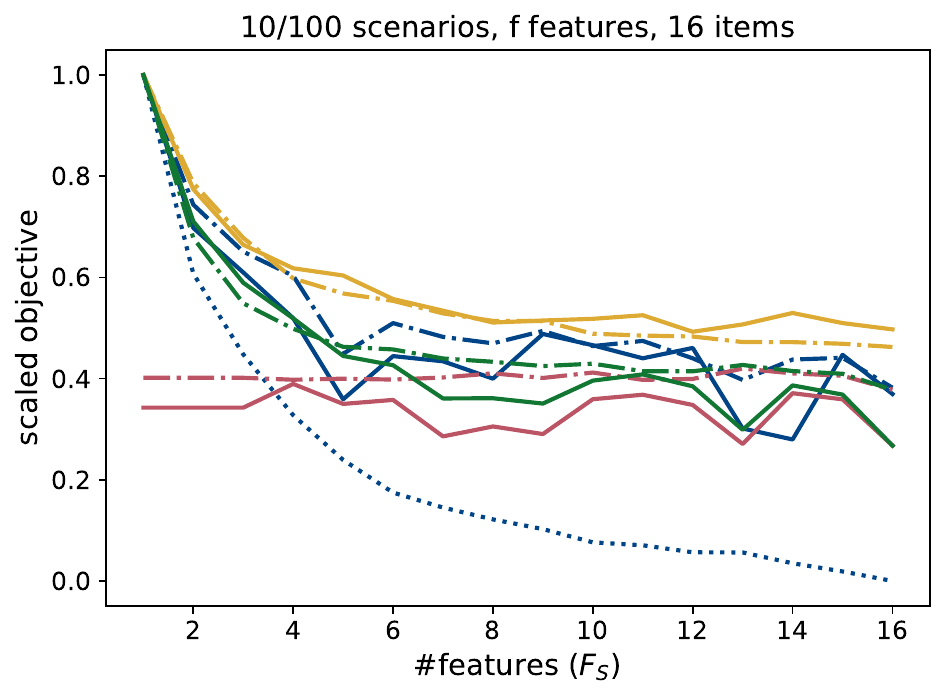}
         \caption{test objective/ \#features.}
         \label{fig::ks::feat-all-te}
    \end{subfigure}
    \begin{subfigure}[t]{0.49\textwidth}
         \centering
         \includegraphics[width=1\linewidth]{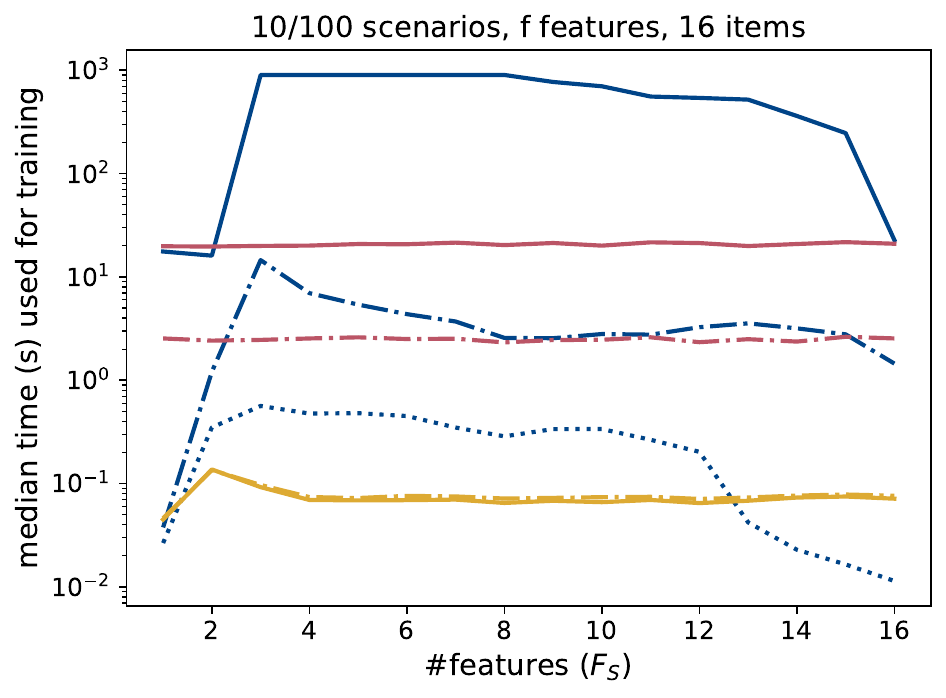}
         \caption{runtime/ \#features.}
         \label{fig::ks::feat-all-ti}
    \end{subfigure}
    \begin{subfigure}[t]{1\textwidth}
         \centering
         \vspace{.4cm}
         \small
\begin{tabular}{clclclcl}
    \raisebox{.3ex}{\includegraphics[scale=.05]{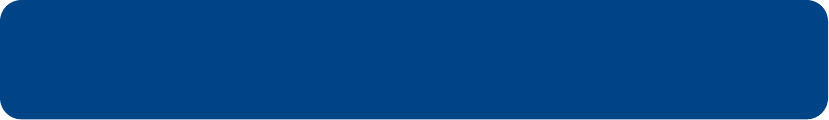}} & \texttt{MIP4} & \raisebox{.3ex}{\includegraphics[scale=.05]{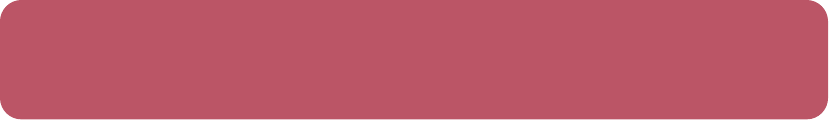}} & \texttt{MICRO4} & \raisebox{.3ex}{\includegraphics[scale=.05]{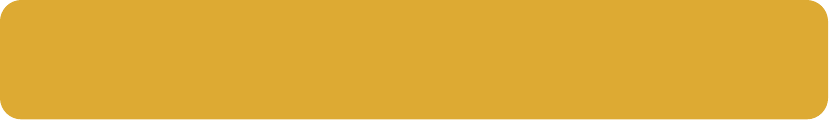}} & \texttt{LH4} & \raisebox{.3ex}{\includegraphics[scale=.05]{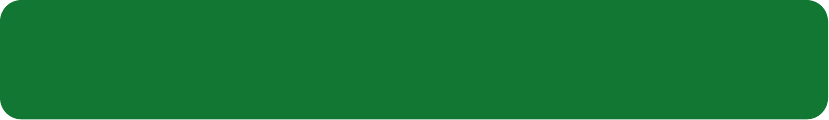}} & \texttt{M2M4}\\
    \raisebox{.3ex}{\includegraphics[scale=.05]{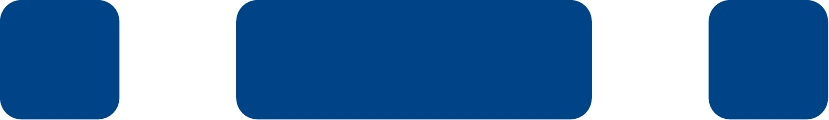}} & \texttt{MIP2} & \raisebox{.3ex}{\includegraphics[scale=.05]{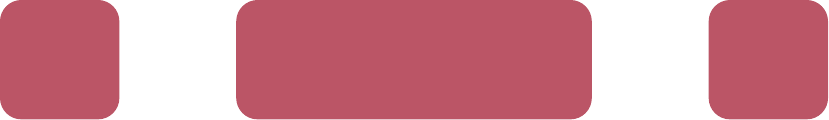}} & \texttt{MICRO2} & \raisebox{.3ex}{\includegraphics[scale=.05]{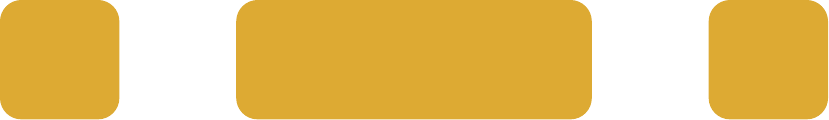}} & \texttt{LH2} & \raisebox{.3ex}{\includegraphics[scale=.05]{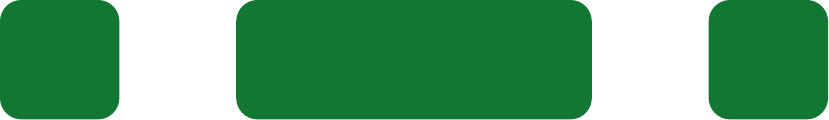}} & \texttt{M2M2}\\
    \raisebox{.3ex}{\includegraphics[scale=.05]{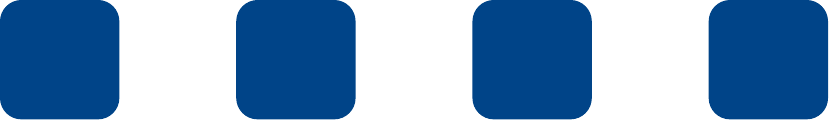}} & \texttt{META1} &
\end{tabular}

    \end{subfigure}
    \caption{Plots for knapsack and synthetic data, \#features on x-axis. A higher scaled objective indicates a better result. The solution-based approach, which serves as a benchmark, is highlighted in red.}
    \label{fig::ks::features}
\end{figure}

In terms of objective performance on the test data, it can be seen that the \texttt{LHx} methods clearly outperform all the other methods. It is especially remarkable that the use of trees with only two leaves generated by \texttt{LH2} nearly always results in better objective values than the use of trees with four leaves made by \texttt{MP4}. Furthermore, it is noticeable that all the MIP-based formulations can be solved particularly quickly for the two extreme cases described.

\subsection{Shortest Path Problems}\label{subs::ex::sp}
In this section, we present results generated by solving shortest-path problems. For artificial graphs, we conducted experiments where the number of scenarios $N$ (Section~\ref{subs::ex::sp::scens}) as well as the dimension of solution features $F_S$ and the number of nodes $n \times n$ (Section~\ref{subs::ex::sp::sizefeat}) were varied.

Those were performed on grid graphs with $n \times n$ nodes, where the node in the southwest corner represents the source and the node in the northeast represents the sink. Every node is connected to its nearest neighbor to its east (if one exists) and to the one in the north (if it exists). The edges are directed from the south to the north and from the west to the east. All graphs used are therefore directed acyclic graphs. The nodes are divided into $F_S$ equally sized square districts, which are used to describe the meta-solutions. In this setting, every meta path from the source $s$ to the sink $t$ has the same length, which is dependent on $F_S$. When solving \texttt{MIP2} and \texttt{MIP4}, the parameter $\Delta$, which regulates the maximum length of a meta-solution, was therefore set to $2\sqrt{F_S} -1$.

In Section~\ref{subs::ex::sp::chicago}, we present extended results for the experiments conducted on Chicago's street network~\citep{chassein2019algorithms} introduced in \ref{subs::framework::scope}. It comprises 1,308 edges and 538 nodes. In these experiments, we used ten scenarios as a training set and 1,000 scenarios as a test set. All scenarios consist of artificially generated edge costs representing four different traffic scenarios presented in Figure~\ref{fig:traffic_scenarios} as well as the time of the day and the day of the week, both encoded as integers.

\subsubsection{Varying Number of Scenarios}\label{subs::ex::sp::scens}

Figures \ref{fig::sp::scens-all-tr} to \ref{fig::sp::scens-heu-ti} illustrate the relationship between objective/time and the number of used training scenarios. The results show that the quality of the solutions on the training data essentially does not increase with more scenarios considered. The solution quality of \texttt{MICRO2} even decreases. It can be observed that\dash as expected\dash the performance on the test data improves as the number of training scenarios increases. While for most methods this is only a slight improvement, the impact on \texttt{MICROx} is quite significant. As with the experiments considering knapsack problems, the required training time shows a dip for \texttt{MICRO4} that we cannot explain. There is an otherwise clear positive correlation between the number of scenarios and the computing time. For all investigated instances, the use of \texttt{LHx} is preferable to that of \texttt{M2Mx}.

\begin{figure}
    \centering
    \begin{subfigure}[t]{0.47\textwidth}
         \centering
         \includegraphics[width=1\linewidth]{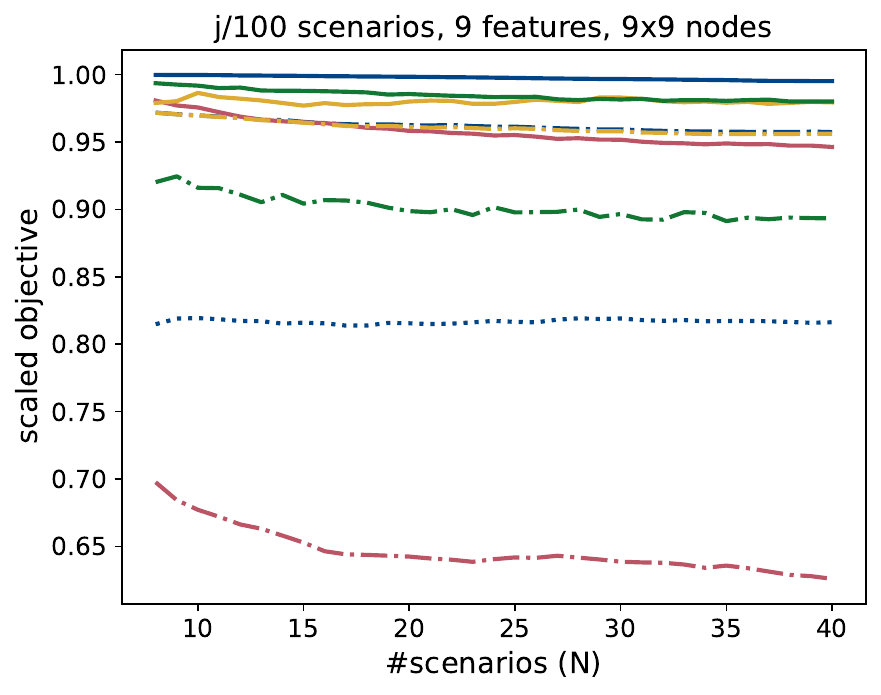}
         \caption{objective training/ \#scenarios.} 
         \label{fig::sp::scens-all-tr}
    \end{subfigure}
    \begin{subfigure}[t]{0.47\textwidth}
         \centering
         \includegraphics[width=1\linewidth]{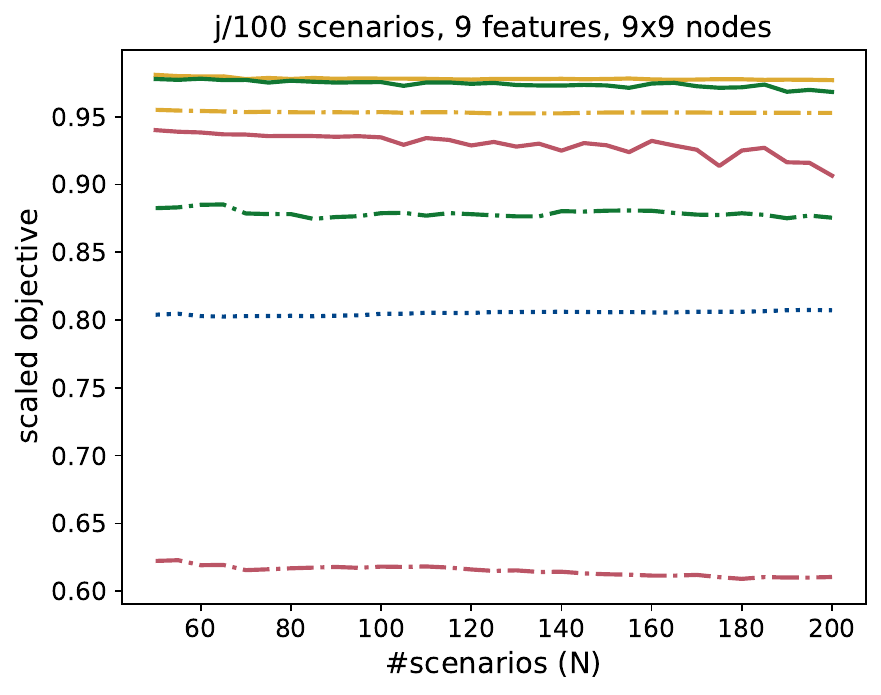}
         \caption{objective training/ \#scenarios.} 
         \label{fig::sp::scens-heu-tr}
    \end{subfigure}   
    \begin{subfigure}[t]{0.47\textwidth}
         \centering
         \includegraphics[width=1\linewidth]{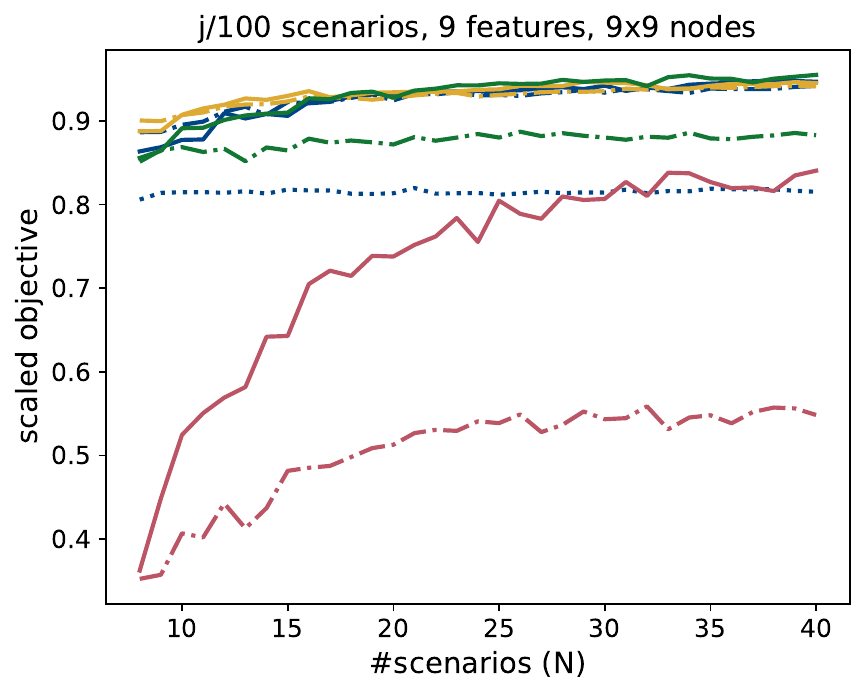}
         \caption{objective test/ \#scenarios.} 
         \label{fig::sp::scens-all-te}
    \end{subfigure}
    \begin{subfigure}[t]{0.47\textwidth}
         \centering
         \includegraphics[width=1\linewidth]{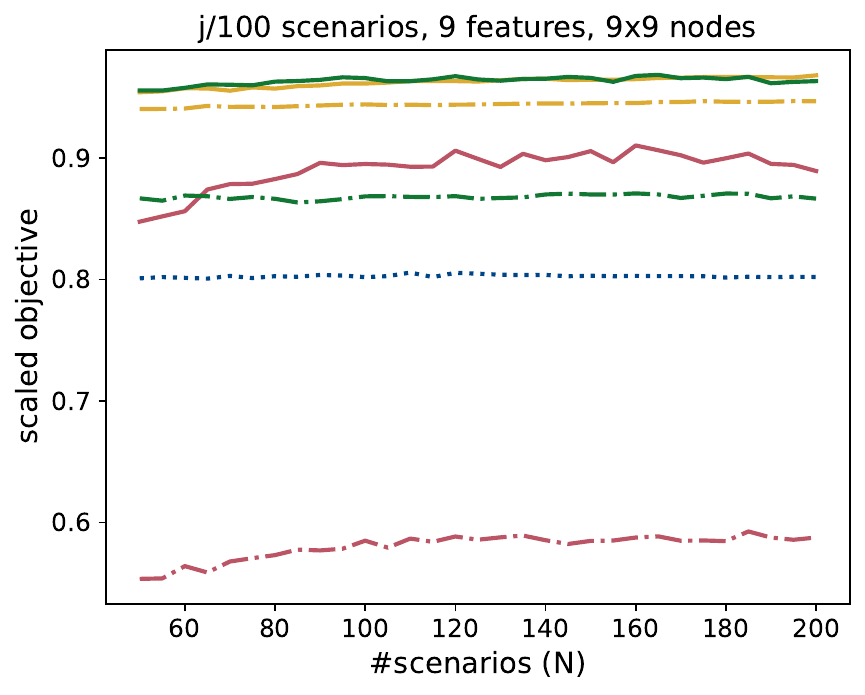}
         \caption{objective test/ \#scenarios.} 
         \label{fig::sp::scens-heu-te}
    \end{subfigure}   
    \begin{subfigure}[t]{0.47\textwidth}
         \centering
         \includegraphics[width=1\linewidth]{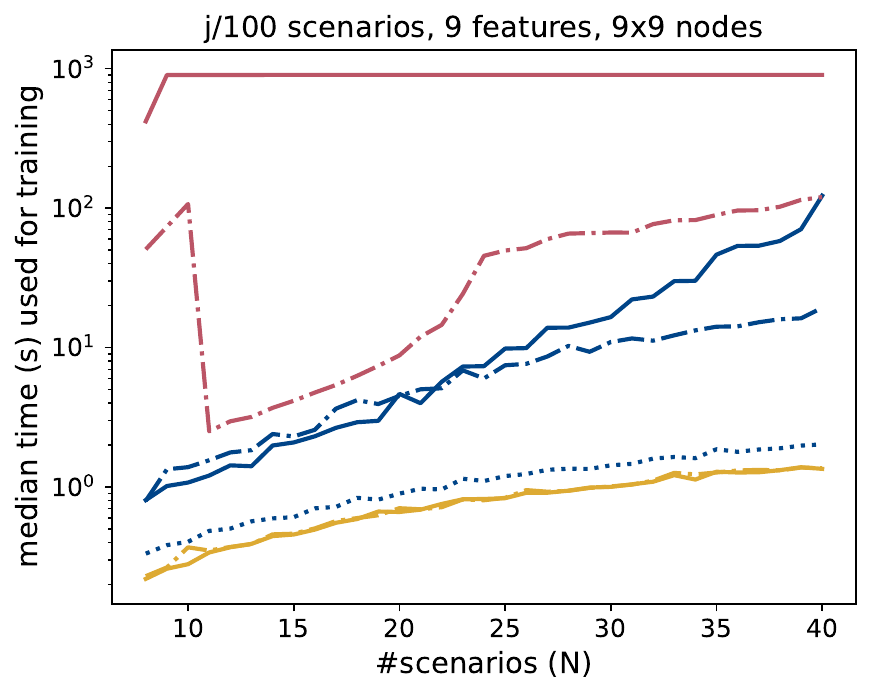}
         \caption{runtime/ \#scenarios.}
         \label{fig::sp::scens-all-ti}
    \end{subfigure}
    \begin{subfigure}[t]{0.47\textwidth}
         \centering
         \includegraphics[width=1\linewidth]{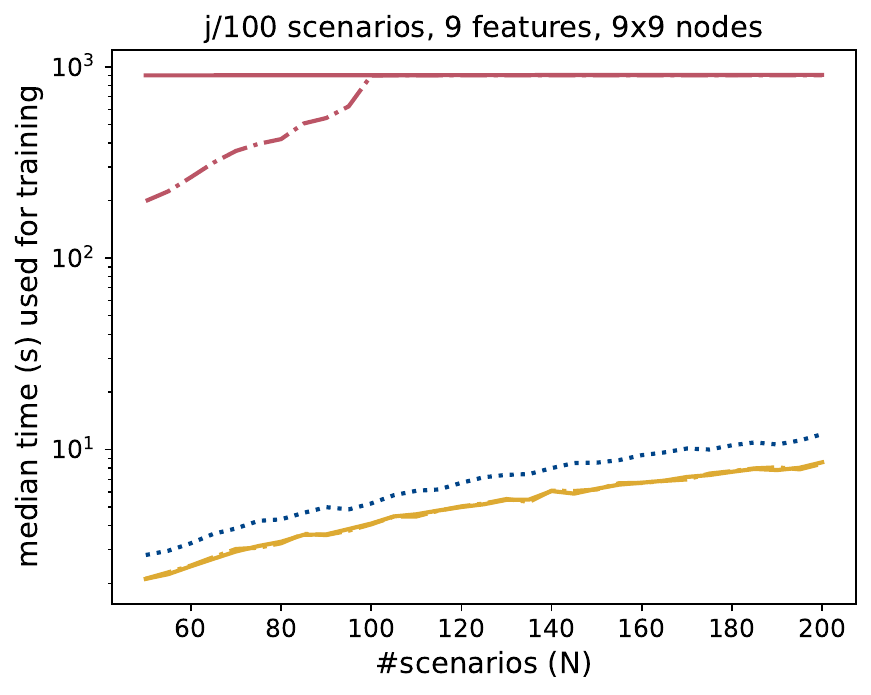}
         \caption{runtime/ \#scenarios.}
         \label{fig::sp::scens-heu-ti}
    \end{subfigure}   
    \begin{subfigure}[t]{1\textwidth}
         \centering
         \vspace{.4cm}
         
    \end{subfigure}   
    \caption{Plots for shortest path and synthetic data, \#scenarios on x-axis. A higher scaled objective indicates a better result. The solution-based approach, which serves as a benchmark, is highlighted in red.}
    \label{fig::sp::scens}
\end{figure}

\subsubsection{Varying Grid Size and Features}\label{subs::ex::sp::sizefeat}

Due to the type of graphs used, the reasonable choice of parameters $F_S$ and $n$ is limited. The results for experiments where those were varied are presented in Table~\ref{tab::res_sp_grid}. It can be seen that when increasing the size of the graph whilst using the same value for $F_S$, the performance of all methods\dash except \texttt{MICROx}\dash increases. The use of larger graphs furthermore results in larger computation times for all methods used. \texttt{MIPx} can be solved remarkably fast. At the extreme points, when varying $F_S$, as described in detail in Section~\ref{subs::ex::knap}, the known and expected behavior can be recognized. In general, the performance on the training as well as on the test data decreases for all methods except \texttt{MICROx}. Except for the latter, the computation time needed increases with the number of features used. It is worth mentioning that the \texttt{MIPx} methods can be solved significantly faster for $F_S=81$ than \texttt{MICROx}, although both represent the same problem.

\begin{table}[htbp]
    \centering
\begin{tabular}{crrrrrrrrrr}
\multicolumn{1}{l}{} &    &    & \multicolumn{4}{c}{2 leaves}                                  & \multicolumn{4}{c}{4 leaves}                                         \\
\multicolumn{1}{l}{} & \multicolumn{1}{r}{$n$}  & \multicolumn{1}{r}{$F_S$}  & \texttt{MIP}  & \texttt{MICRO}& \texttt{LH}   & \texttt{M2M}  & \texttt{MIP}            & \texttt{MICRO} & \texttt{LH}   & \texttt{M2M}  \\
\hline
\parbox[t]{2mm}{\multirow{5}{*}{\rotatebox[origin=c]{90}{Training}}} & 9  & 1  & \textbf{1.00} & 0.67   & \textbf{1.00} & \textbf{1.00} & \textbf{1.00} & 0.98          & \textbf{1.00} & \textbf{1.00} \\
                     & 9  & 9  & \textbf{0.97} & 0.67   & \textbf{0.97} & 0.92          & \textbf{1.00} & 0.98          & 0.99          & 0.99          \\
                     & 9  & 81 & 0.67          & 0.67   & 0.63          & \textbf{0.68} & \textbf{0.98} & \textbf{0.98} & 0.80          & \textbf{0.98} \\
                     & 6  & 9  & \textbf{0.94} & 0.71   & \textbf{0.94} & 0.88          & \textbf{1.00} & 0.98          & 0.98          & 0.99          \\
                     & 12 & 9  & \textbf{0.96} & 0.49   & \textbf{0.96} & 0.88          & \textbf{1.00} & 0.83          & 0.98          & 0.96          \\
                     \hline
\parbox[t]{2mm}{\multirow{5}{*}{\rotatebox[origin=c]{90}{Test}}} & 9  & 1  & \textbf{1.00} & 0.40   & \textbf{1.00} & \textbf{1.00} & \textbf{1.00} & 0.51          & \textbf{1.00} & \textbf{1.00} \\
                     & 9  & 9  & 0.90          & 0.41   & \textbf{0.91} & 0.87          & 0.88          & 0.52          & \textbf{0.91} & 0.89          \\
                     & 9  & 81 & \textbf{0.43} & 0.39   & 0.42          & 0.39          & \textbf{0.52} & 0.50          & 0.50          & 0.50          \\
                     & 6  & 9  & 0.84          & 0.38   & \textbf{0.85} & 0.77          & 0.82          & 0.49          & \textbf{0.85} & 0.83          \\
                     & 12 & 9  & \textbf{0.90} & 0.16   & \textbf{0.90} & 0.85          & 0.88          & 0.18          & \textbf{0.89} & 0.87          \\
                     \hline
\parbox[t]{2mm}{\multirow{5}{*}{\rotatebox[origin=c]{90}{Time (s)}}} & 9  & 1  & 0.22          & 100.50 & \textbf{0.11} & 100.50        & 0.49          & 900.74        & \textbf{0.11} & 900.74        \\
                     & 9  & 9  & 1.39          & 107.06 & \textbf{0.37} & 107.06        & 1.08          & 900.70        & \textbf{0.28} & 900.70        \\
                     & 9  & 81 & 16.96         & 84.39  & \textbf{1.32} & 84.39         & 19.16         & 900.63        & \textbf{1.25} & 900.63        \\
                     & 6  & 9  & 0.32          & 33.74  & \textbf{0.16} & 33.74         & 0.43          & 448.02        & \textbf{0.14} & 448.02        \\
                     & 12 & 9  & 3.32          & 211.25 & \textbf{0.63} & 211.25        & 1.94          & 901.08        & \textbf{0.59} & 901.08       
\end{tabular}
\caption{Results for varying grid size and features.}
\label{tab::res_sp_grid}
\end{table}

\subsubsection{Chicago Graph}\label{subs::ex::sp::chicago}

For the trees generated using the Chicago graph, we restricted ourselves to trees with a maximum depth of two and the use of the instance (meta) features weekday and time. In Table \ref{tab::chicago2}, the scaled objectives and computational times are presented. Our example tree presented in Figure \ref{fig::tree_art::tree} is referred to as \texttt{ART4}.

\begin{table}[h!tbp]
    \centering
    \begin{tabular}{cccccc}
        & \texttt{MIP4} &\texttt{MICRO4}& \texttt{LH4} & \texttt{M2M4} & \texttt{ART4}\\
        \hline
        Training & \textbf{0.95} & 0.90 & 0.92 & 0.94 & 0.92 \\
        Test & 0.86 & 0.40& 0.82 & 0.78& \textbf{0.89} \\
        Time (s) & 23.22 & 6.22 & 6.08 & 6.22 & --
    \end{tabular}
    \caption{Results for the Chicago instance.}
    \label{tab::chicago2}
\end{table}

It can be seen that, congruent with the previous results, all the methods using meta-solutions achieve better objective values than our benchmark \texttt{MICRO4}. Especially, both of the heuristics lead to good results on the test data. Even though we were using a large graph, the methods \texttt{MIP4} and \texttt{MICRO4} could be solved in an acceptable time.

\subsection{Discussion and Managerial Insights}\label{subs::ex::sum}

Summarizing the presented results, we see that solving \texttt{MIPx} results in genuinely good solutions. It was shown that for most instances, the formulations could not be solved to optimality within the time limit of 900 seconds. Using this method, therefore, takes much more time than solving the nominal problems. However, it should be noted that in real-world applications, the found optimization rule would be used many times, which amortizes the computational effort. Furthermore, feasible solutions were found in every case within the time limit.

Using the learning heuristic (\texttt{LHx}), we generated solutions that were often nearly as good, and in some cases even better, than those produced by \texttt{MIPx}. This is remarkable as the trees could be generated using the heuristic in a fraction of the time limit.
Regarding the objective value, the heuristic \texttt{M2Mx} performed mostly worse than \texttt{MIPx} and \texttt{LHx}. Furthermore, it has been shown not to scale well due to the necessity of solving an MIP formulation as part of the process. This process can be sped up by using heuristics for generating trees with micro solutions assigned to their leaves, e.g., via greedy heuristics. Our results imply \texttt{LHx} to be preferable to \texttt{M2Mx} in general and to \texttt{MIPx} at least for big instances, which should occur in reality in most cases.

Due to the restriction of only considering $K$ solutions and the requirements of the tree structure, none of our methods were able to achieve the objective of the black-box model~(\texttt{OPT}). Nevertheless, in comparison to the interpretable approach from~\citet{GOERIGK20231312} (\texttt{MICROx}), our framework has the potential to produce\dash depending on the value of $F_S$\dash better solutions. In some cases, trees with two leaves and meta-solutions even outperformed trees with four leaves and micro-solutions on the test data. Therefore, we can further close the gap between the performance of interpretable surrogates and black-box models whilst being able to increase interpretability at the same time by the use of smaller trees.

It could also be shown that an increasing ratio between the dimension of features and scenario size leverages the performance of our approach. For example, in the experiments presented in the Electronic Companion, for values of $F_S$ in the interval $[2, 6]$ whilst keeping the instance size fixed, \texttt{MIP4} results in solutions which are 47.57\% better on average compared to \texttt{MICRO4} considering their scaled objective. For values of $F_S$ in $[7, 15]$ a smaller gap of 25.57\% on average\dash but in favor of \texttt{MIP4}\dash can be observed. Without the context of a real-world application, our results suggest therefore to set $F_S$ to a relatively small value while ensuring that the selected features are still interpretable. A small value of $F_S$, though, increases the requirements on the user of the decision tree since the solution space described by the solution features increases. This could lead to a point where, in reality, optimal micro-solutions can not be guaranteed and the quality of solutions decreases. Note that this result does not hold in general. For example, there are cases in which adding meaningless features does not result in a decrease in solution quality and vice versa. 

The ability to work with a given meta-solution is highly dependent on the user and the given use case. We therefore do not consider this in our computational experiments. The performance gain in real-world applications is hence related to a reasonable choice of the solution features and their dimensions. 

Our presented framework is suitable if an interpretable solution process for an optimization problem is needed, and the decision maker who is executing the generated optimization rules has some domain knowledge as well as the authorization to make decisions on their own. If the latter is not the case, we recommend using an interpretable method, where strict solutions are found during the optimization process. Both approaches require historical or at least estimated data for the generation of the optimization rules. Since both use small decision trees to represent the optimization rules, interpretability is ensured which helps to increase user acceptance.

Our approach provides optimization rules that map a given scenario to a space of possible solutions. It therefore allows\dash and requires\dash a final decision, which is the actual solution to implement. This more dynamic approach allows the user to react to unpredictable events and hence generate better solutions. Furthermore, it helps to avoid undesired micromanagement.

The options for generating training data are case-specific. If meaningful historical data is available, its use can be considered, otherwise, data must be estimated. Depending on the size of the problem, \texttt{MIPx} or \texttt{LHx} should be used for the training of the decision tree. For most bigger instances, applying \texttt{LHx} will be beneficial due to a much shorter runtime, whilst for small instances or in use cases where sufficient time is available, \texttt{MIPx} can potentially generate better solutions. The solution features have to be chosen in a way that enables the user to understand the meta-solutions while still providing a sufficient degree of freedom.

\section{Conclusion}
\label{sec::concl}

Interpretable optimization surrogates aim to bridge the gap between optimization theory and practice by accounting for the methods used by decision-makers and stakeholders. By providing solution methods that are transparent and easy to apply, we avoid what is de facto black-box optimization and therefore improve the acceptance of optimization algorithms. As this topic has been introduced only recently, the current toolbox of interpretable methods is still very limited. 

In this paper, we introduce a flexible framework to find the best optimization rule to map instance features to solution features. This means that a post-optimization step becomes necessary to convert the resulting solution features (also referred to as a meta-solution) to a single, feasible solution. In the context of a shortest path problem, for example, this may mean that a given sequence of districts must be converted to a path. The granularity of features should thus be modeled in a way that a decision-maker can perform this step.

We demonstrated how to apply this framework in different settings, with a special focus on knapsack and shortest path problems. We introduced mixed-integer programming formulations as well as heuristics, in particular a learning heuristic that leverages existing fast and reliable methods to construct classification trees. We analyzed the complexity of our framework and showed that simple and natural special cases already turned out to be hard. In extensive computational experiments involving randomly generated data as well as real-world data, we analyzed the performance of our framework and found that the additional flexibility provided by the use of features can greatly improve the quality of the produced optimization rule, thus reducing the cost of interpretability, i.e., the difference to what is achievable by classic, non-interpretable solution methods.
This reduced cost, along with the inherent benefits brought on by interpretability, such as acceptability and responsiveness, makes it worthwhile as a manager to put resources into exploring the benefits of interpretability and to investigate how different stakeholders might react to more comprehensible optimization results.

As interpretability in optimization is a recent field of research, we believe there are many further directions to study. One aspect is to analyze the impact if the decision maker is not able to choose an optimal solution for a given meta-solution and instead relies on suboptimal outcomes. 
In this case, we might define a probability distribution over the set of all solutions that correspond to a meta-solution to obtain an expected performance metric (i.e., a stochastic approach), or even consider a worst-case solution from this set (i.e., a robust approach). Uncertainty may not only be present in the conversion of meta-solutions to solutions but also in the problem data. In this case, it may be beneficial to construct decision trees that take uncertainty in training data into account.

\section{Code and Data Disclosure}\label{sec:Code and Data Disclosure}The code and data to support the numerical experiments in this paper is available on demand.

\newpage
\appendix

\section{The Necessity of Preventing Circles in the Solution of Shortest Path Based Instances}\label{appendix:cycles}

The question of whether it is necessary to actively avoid cycles in the concept presented in Chapter~\ref{subs::mod::sp} arises, since it seems counterintuitive that an optimal solution to a shortest path problem with non-negative edge costs contains cycles when optimizing w.r.t. the Laplace criterion. As the assumption of having to suppress them plays a significant role in the proofs presented in Appendix~\ref{appendix:proofs}, this question has direct and clear effects on the correctness of the hardness results. We therefore present an example illustrating the necessity of prohibiting them.

\begin{figure}[ht]
    \centering
    \includegraphics[width=0.3\linewidth]{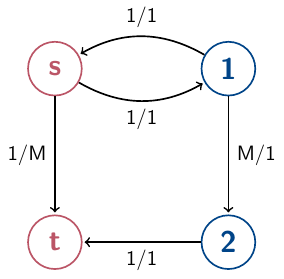}
    \caption{Example for a shortest path instance with two scenarios and two solution features highlighting the need to actively avoid cycles in the solutions.}
    \label{fig:cycles}
\end{figure}

Consider the instance of a shortest path problem depicted in Figure~\ref{fig:cycles}. We are given two cost scenarios which are displayed on the corresponding edges. Let $M$ be a large number. Furthermore, let $f(s)=f(t)=red$ and $f(1)=f(2)=blue$. We now want to determine a meta solution for finding a path from $s$ to $t$ such that the costs with respect to the Laplace criterion are minimized.

First, cycles in the paths are not considered prohibited. In this case, the (unique) optimal meta path is the sequence (\textit{red-blue-red}) which allows the paths (\textit{s-1-s-t}) for the first and (\textit{s-1-2-t}) for the second scenario. This meta solution, therefore, results in an objective value of~$6$. It becomes obvious that this solution exploits the possibility of cycles in the paths. If this meta solution is evaluated in an environment where paths containing cycles are considered infeasible, the optimal path in scenario one becomes (\textit{s-1-2-t}), resulting in an overall objective value of $M+5$.

Indeed, the (also unique) optimal solution for the problem of finding an optimal meta path if cycles are prohibited is given by the sequence ($red$) and the corresponding paths (\textit{s-t}) and (\textit{s-t}). This results in an objective value of only $M+1$.

Note that this example highlights the necessity of actively suppressing cycles in paths when solving the problem of finding the best meta solution, as well as when solving the problem of finding optimal paths for a given meta solution.

\section{Hardness Results}\label{appendix:proofs}

We now turn to the complexity of finding interpretable surrogates for the shortest path problem. As it is possible to solve the nominal shortest path problem in graphs without negative cycles efficiently, the question arises whether polynomial-time algorithms also exist for finding interpretable optimization rules in this case. In the following, we give negative answers to two special cases.

\begin{theorem}\label{th:metahard1}
The following problem is NP-complete: Given a graph $G = (V, E)$ and a meta-path, decide whether there is a path from node $s$ to node $t$ in $G$ that corresponds to the meta-path.
\end{theorem}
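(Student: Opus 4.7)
My plan is as follows. Membership in NP is immediate: given a candidate simple $s$-$t$-path as a certificate, I can walk along it in linear time, contract each maximal run of vertices that lie in the same district, and verify that the resulting district sequence equals the given meta-path.

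For NP-hardness I intend to reduce from the directed $2$-disjoint paths problem, which is known to be NP-complete: given a directed graph $D$ and four distinct terminals $s_1, t_1, s_2, t_2$, decide whether $D$ contains internally vertex-disjoint paths $P_1 : s_1 \to t_1$ and $P_2 : s_2 \to t_2$. From such an instance I build the meta-path instance in polynomial time. I take $G := D$, add a fresh arc $(t_1, s_2)$ (if this arc already exists in $D$, I route the fresh copy through a private dummy vertex placed in its own singleton district between $D_{t_1}$ and $D_{s_2}$ in the meta-path below), and subdivide every arc of $D$ incident to one of the four terminals with a fresh degree-two vertex so that no $s_1 \to t_1$ or $s_2 \to t_2$ traversal can consist of a single arc. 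Each of $s_1, t_1, s_2, t_2$ becomes the sole element of its own singleton district $D_{s_1}, D_{t_1}, D_{s_2}, D_{t_2}$, while all remaining vertices of $G$ are pooled into one district $D_0$. I set $s := s_1$, $t := t_2$ and declare the meta-path
\[
(D_{s_1},\ D_0,\ D_{t_1},\ D_{s_2},\ D_0,\ D_{t_2}).
\]

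The two directions of correctness should then follow easily. Forward: given two internally vertex-disjoint paths $P_1, P_2$ in $D$, their subdivided versions concatenated through the fresh arc $(t_1, s_2)$ give a simple $s$-$t$-path in $G$ whose contracted district sequence matches the meta-path exactly. Reverse: any simple $s$-$t$-path respecting the meta-path must begin at $s_1$, traverse a non-empty block of $D_0$-vertices, arrive at $t_1$, cross to $s_2$ via the unique arc from $D_{t_1}$ into $D_{s_2}$, traverse a second $D_0$-block, and terminate at $t_2$; simplicity of the full path forces the two $D_0$-blocks to be vertex-disjoint, so contracting the subdivisions recovers a pair of internally vertex-disjoint paths $s_1 \to t_1$ and $s_2 \to t_2$ in $D$.

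The step I expect to require the most care is confirming that no unintended transition defeats the reduction. Concretely, I must verify (i) that the transition $D_{t_1} \to D_{s_2}$ mandated by the meta-path is realizable in $G$ only through the freshly added arc, which holds because both districts are singletons and no other $t_1 \to s_2$ arc is available; (ii) that the terminal subdivisions really do force a non-empty $D_0$-block on either side of $D_{t_1}D_{s_2}$, so that a matching path cannot collapse one of the $D_0$ entries out of its district sequence; and (iii) that neither the $D_{s_2}$ entry nor the $D_{t_1}$ entry of the meta-path can be realized prematurely, which follows because $s_2$ and $t_1$ are the unique occupants of their singleton districts and therefore can only appear at the positions the meta-path dictates. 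Once these sanity checks are in place the reduction is polynomial and NP-completeness of the meta-path problem follows.
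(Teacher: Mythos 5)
Your reduction is correct in substance but takes a genuinely different route from the paper's. The paper reduces from the Hamiltonian $s$-$t$-path problem: each node $v$ is split into $v$ (district $a$) and $v'$ (district $b$) joined by an edge $(v,v')$, each original edge $\{i,j\}$ becomes $(i',j)$ and $(j',i)$, and the meta-path is the alternating sequence $a,b,a,b,\ldots$ of length $2|V|$; since no edge of that construction stays inside a district, a path matching the meta-path must use exactly $2|V|$ nodes and is therefore Hamiltonian. You instead reduce from the directed two-disjoint-paths problem with a meta-path of constant length six over five districts. The two constructions buy complementary strengths: the paper's shows hardness already with only two districts (at the price of a meta-path whose length grows with $|V|$), while yours shows hardness already for meta-paths of bounded length (at the price of more districts), and yours also makes NP-membership explicit, which the paper leaves implicit. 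One point to tighten: you should reduce from the \emph{fully} vertex-disjoint version of the directed two-paths problem (the variant proved NP-complete by Fortune, Hopcroft and Wyllie), not the ``internally vertex-disjoint'' one. Taken literally, internal disjointness permits $P_1$ to pass through $s_2$ or $t_2$ as an internal vertex, in which case the concatenated walk is not a simple path and its contracted district sequence contains a premature $D_{s_2}$ or $D_{t_2}$, so your forward direction would fail on such a witness. With full vertex-disjointness --- which is also exactly what your reverse direction recovers, since the two $D_0$-blocks cannot contain any terminal and are disjoint by simplicity --- both directions go through and the reduction is sound.
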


\begin{proof}{Proof}
A reduction from the NP-complete Hamiltonian path problem~\citep{garey1979computers} is constructed. Given an undirected graph $G=(V,E)$ with nodes $s,t\in V$, we need to decide whether there is a (simple) $s$-$t$-path visiting all nodes in $V$ exactly once. We first create a graph $\bar{G} = (\bar{V} ,\bar{E})$ in the following way: For every $v \in V$, two nodes $v$ and $v^\prime$ are added to $\bar{G}$ as well as an edge $(v,v^\prime)$. Node $v$ is assigned to district $a$, and node $v^\prime$ to district $b$. For every $\{i,j\} \in E$, the edges $(i^\prime,j)$ and $(j^\prime,i)$ are added to $\bar{G}$.
Figure~\ref{fig:proof1} shows an example of this construction.

\begin{figure}[h!]
\centering
\includegraphics[scale=.8]{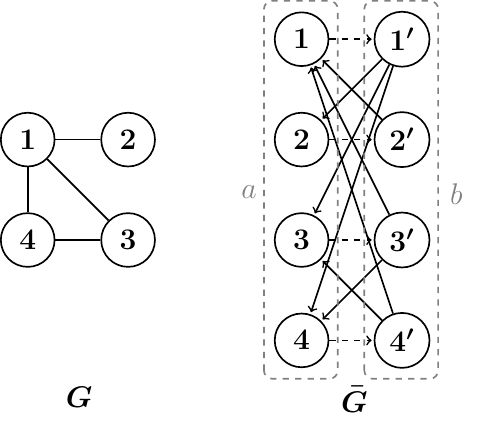}
\caption{Example of construction for the proof of Theorem~\ref{th:metahard1}.}
\label{fig:proof1}
\end{figure}

We define a meta-path of length $2|V|$ consisting of an alternating order of the districts $a$ and $b$, starting with $a$. Let $\bm{x}^\prime$ be a simple path connecting $s$ and $t^\prime$ in $\bar{G}$ corresponding to the meta-path. Since this path has to include $2|V| = |\bar{V}|$ many nodes, every node in $\bar{G}$ is visited exactly once. We can derive a path in $G$ which visits $|V|$ nodes exactly once by only considering the edges of type $(i^\prime,j)$. Similarly, any Hamiltonian path in $G$ beginning in $s$ and ending in $t$ can be extended to a feasible path in $\bar{G}$.

Therefore, there exists a Hamiltonian path in $G$ if and only if the meta-path problem instance is a yes-instance.
\end{proof}

\begin{theorem}\label{th:metahard2}
The following problem is NP-complete: Given a graph $G=(V, E)$ where each node $v\in V$ belongs to a district $f(v)$, and a list of cost scenarios $\bm{c}^1,\ldots,\bm{c}^N\in\{0,1\}^E$. Decide whether there is a meta path such that there exist paths that satisfy the meta path which have a cost of 0 for each scenario. In particular, the problem of finding a single meta-path with minimum costs is not approximable, even in directed acyclic graphs.
\end{theorem}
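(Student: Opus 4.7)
My plan for Theorem~\ref{th:metahard2} is to establish NP-completeness of the decision problem and then derive the inapproximability of the minimization version on DAGs from the very same reduction. Membership in NP is immediate: a certificate consists of the meta-path together with one realizing $s$-$t$-path of cost zero for each of the $N$ scenarios, and checking that every realizing path visits the prescribed sequence of districts and avoids all unit-cost edges can be done in polynomial time.

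For NP-hardness I would reduce from 3-SAT. Given a formula $\varphi$ with variables $x_1,\ldots,x_n$ and clauses $C_1,\ldots,C_m$, I would construct a DAG $G$ whose meta-paths are essentially in one-to-one correspondence with truth assignments. The skeleton consists of $n$ variable gadgets $V_i$ arranged in series: each $V_i$ offers two parallel sub-paths through nodes in distinct districts $D_i^T$ and $D_i^F$, so that the district the meta-path chooses at the $i$-th position records the truth value of $x_i$. For each clause $C_j$ I would then append a clause gadget $Q_j$ containing three parallel literal branches collected in a single common district $D_j$; the costs of scenario $j$ are chosen so that a given literal branch is traversable at cost zero exactly when the associated literal of $C_j$ is true under the assignment encoded by the meta-path. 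This is accomplished by attaching short auxiliary structures that penalize a branch precisely when the variable gadget has selected the falsifying side of the relevant variable, exploiting the key feature of the framework that different scenarios may realize the same meta-path through different $s$-$t$-paths.

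The constructed graph is a DAG because all gadgets are laid out in series and every auxiliary anchoring edge points forward in the spine ordering. The reduction exhibits a sharp $0$-versus-$\geq 1$ gap in the optimal value: a satisfying assignment yields a meta-path whose total scenario cost is exactly zero, since each $Q_j$ then has a zero-cost literal branch, whereas an unsatisfiable formula forces at least one scenario to incur unit cost on every realizing path. Because any multiplicative approximation of $0$ still equals $0$, this gap rules out any polynomial-time algorithm with a finite approximation ratio unless $\mathrm{P}=\mathrm{NP}$, which is exactly the inapproximability claim on DAGs stated in the theorem.

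The main obstacle is encoding clause satisfaction (a disjunction over three literals) with $\{0,1\}$ edge costs, which are inherently additive and therefore most naturally model conjunctions. My plan resolves this by relocating the ``OR'' from the cost aggregation into the graph topology: packaging the three literal branches of $Q_j$ in a shared district ensures that the meta-path does not force a single branch, and then the scenario-$j$ cost pattern, together with the anchoring structure, guarantees that some branch is cost-free if and only if some literal of $C_j$ is satisfied. The most delicate step will be verifying that the anchoring faithfully distinguishes consistent from inconsistent literal-assignment combinations while keeping the construction acyclic and of polynomial size.
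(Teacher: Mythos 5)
Your high-level strategy matches the paper's: reduce from 3SAT, let the meta-path (the district sequence) encode a truth assignment, let the per-scenario freedom in realizing the meta-path encode the choice of satisfying literal, and extract inapproximability from the resulting zero-versus-positive gap. The NP-membership and gap arguments are fine. However, the core of the reduction --- the mechanism that makes a clause branch traversable at cost zero \emph{exactly when} the corresponding literal is true under the assignment --- is left unspecified, and the architecture you describe cannot supply it. The scenario cost vectors $\bm{c}^j$ are fixed data of the instance; they cannot depend on which meta-path is chosen. The only channel through which the assignment can influence what happens in a clause gadget is feasibility, i.e., the prescribed district sequence. But you place all three literal branches of $Q_j$ in a single common district $D_j$, so the meta-path cannot distinguish them, and every branch is equally feasible under every assignment. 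Hence neither the costs nor the feasibility constraints can make branch availability depend on the truth value of the literal, and the ``auxiliary anchoring structures'' that are supposed to do this --- which you yourself flag as the most delicate step --- are exactly the missing idea.

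The paper closes this gap by dispensing with separate clause gadgets altogether. Each variable district consists of two copies $t_i^1,t_i^2$ (resp.\ $f_i^1,f_i^2$), with two parallel ``layers'' of zero-cost inter-district edges; every $s$-$t$-path must switch from layer $1$ to layer $2$ by using exactly one intra-district edge $(t_i^1,t_i^2)$ or $(f_i^1,f_i^2)$, and in scenario $j$ (one scenario per clause) that edge costs zero iff the corresponding literal occurs in $C_j$. Thus the clause check happens \emph{inside} the variable gadgets: the meta-path fixes the assignment, and the free choice of where to switch layers selects the satisfying literal, all with fixed per-scenario costs. To salvage your series-of-clause-gadgets layout you would need the branch for literal $x_i$ in $Q_j$ to be routed through district $D_i^T$ again, so that its feasibility genuinely depends on the assignment; that complicates both the meta-path and the acyclicity argument, and the paper's two-layer construction is the cleaner resolution.
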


\begin{proof}{Proof}
Let an instance of the NP-complete 3SAT problem~\citep{garey1979computers} be given. It consists of $n$ Boolean variables $x_1,\ldots,x_n$ and a list of clauses $C_1,\ldots, C_m$, where each clause is a disjunction of three literals. We need to decide if there is a truth assignment of the $n$ variables such that all $m$ clauses are true.

We first show how to construct a graph $G'$ 
with $2n+2$ nodes contained in the set $V'$. For each variable $x_i$, we construct a node $t_i$ and a node $f_i$, which represent the states of ``true'' and ``false'', respectively. We further introduce nodes $s$ and $t$ to be used as the source and target nodes of the meta-path. The set of edges is given by
\begin{align*}
E' = & \Big\{ (a,b) : a\in\{t_i,f_i\},\ b\in\{t_{i+1},f_{i+1}\},\ i=1,\ldots,n-1 \Big\} \\
&\cup \left\{ (s,t_1), (s,f_1), (t_n,t), (f_n,t) \right\}.
\end{align*}
Note that $G'$ does not depend on the clauses of the given 3SAT instance, only on the number of variables. Figure~\ref{fig:metahard1} visualizes graph $G'$ for the case that $n=3$.

\begin{figure}[htbp]
\begin{center}
\includegraphics[width=0.6\textwidth]{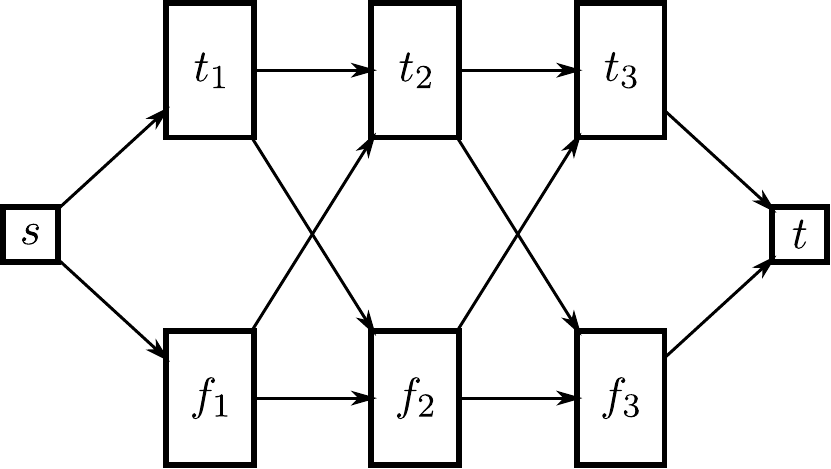}
\caption{Graph $G^\prime$ showing only representative nodes for each district.
}\label{fig:metahard1}
\end{center}
\end{figure}

Any meta-path thus consists of a sequence of nodes $t_i$ and $f_i$ to reach $t$ from $s$, which corresponds to a truth assignment of the Boolean variables of the 3SAT instance.

We now explain the construction of the (micro) graph $G$. Each node $t_i\in V'$ is replaced by two nodes $t^1_i$ and $t^2_i$, and each node $f_i\in V'$ is replaced by two nodes $f^1_i$ and $f^2_i$. Each pair $(f^1_i,f^2_i)$ and $(t^1_i,t^2_i)$ forms a districts. The set of edges is
\begin{align*}
E = & E_1 \cup E_2 \\
&\cup \{ (t^1_i,t^2_i) : i\in[n]\} \cup \{ (f^1_i,f^2_i) : i\in[n] \} \\
&\cup \{ (s,t^1_1), (s,f^1_1), (t^2_n,t), f^2_n,t) \} \\
\text{with } E_1 = &\Big\{ (a,b) : a\in\{t^1_i,f^1_i\},\ b\in\{t^1_{i+1},f^1_{i+1}\},\ i=1,\ldots,n-1 \Big\} \\
E_2 = & \Big\{ (a,b) : a\in\{t^2_i,f^2_i\},\ b\in\{t^2_{i+1},f^2_{i+1}\},\ i=1,\ldots,n-1 \Big\}.
\end{align*}
Finally, we introduce $m$ cost vectors $\bm{c}^j\in\{0,1\}^E$, each of them corresponding to a clause $C_j$. The costs for all edges in $E_1$ and $E_2$ are always zero. The cost of an edge $(t^1_i,t^2_i)$ is zero if $x_i$ is contained in $C_j$, otherwise, the costs are one. Analogously, the cost of an edge $(f^1_i,f^2_i)$ is zero if $\bar{x}_i$ is contained in $C_k$, otherwise, the costs are one.
Finally, the costs of edges $(s,t^1_1)$, $(s,f^1_1)$, $(t^2_n,t)$, and $(f^2_n,t)$ are always zero.

Figure~\ref{fig:metahard2} illustrates the construction of $G$ with the cost scenario that corresponds to the clause $(x_1\vee \bar{x}_2 \vee x_3)$. There are only three edges with cost one, indicated by a dotted arrow. These are the edges $(f^1_1,f^2_1)$, $(t^1_2,t^2_2)$, and $(f^1_3,f^2_3)$.

\begin{figure}[htbp]
\begin{center}
\includegraphics[width=0.7\textwidth]{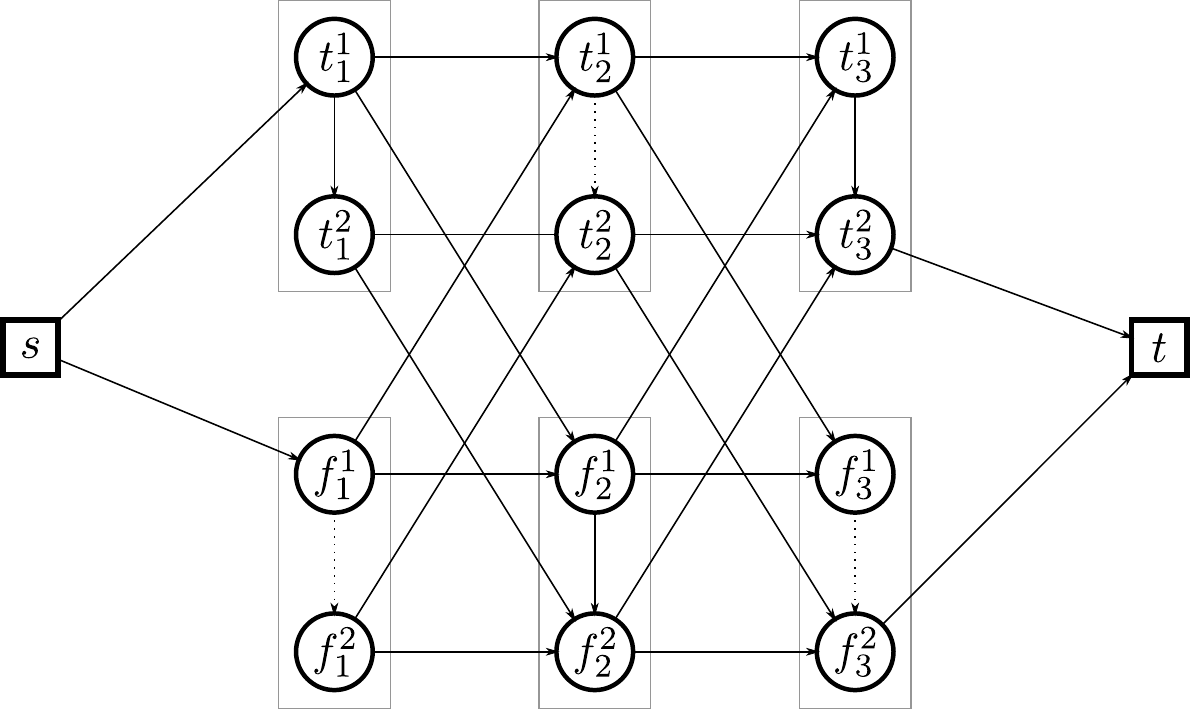}
\caption{Graph in proof of Theorem~\ref{th:metahard2}. Edges represented by solid lines have cost of zero, these represented by dotted lines have cost one.}\label{fig:metahard2}
\end{center}
\end{figure}

Note that any path begins with a node $t^1_1$ or $f^1_1$, but ends with a node $t^2_n$ or $f^2_n$. This means that at some point, exactly one edge $(t^1_i,t^2_i)$ or $(f^1_i,f^2_i)$ must be used in any path. This is possible with costs zero in scenario $\bm{c}^j$ if and only if the meta-path corresponds to a truth assignment that fulfills clause $C_j$. Hence, the 3SAT instance is a yes-instance if and only if there is a meta-path that costs zero in each scenario, which completes the proof.
\end{proof}

\newpage
\setcounter{page}{1}
\section{Content to Be Included in the Electronic Companion}
In this document, the results of extensive experiments on instances of knapsack problems are presented. All experiments were conducted with respect to the setting described in Sections~\ref{sub::ex::setting} and~\ref{subs::ex::knap}.

\subsubsection{Varying Number of Items}\label{subs::ex::knap::items}
First, we present experiments in which the number of items was varied. For these, the number of training scenarios $N$ was set to ten, and the number of solution features $F_S$ was set to four. Figures \ref{fig::ks::items-all-tr} and \ref{fig::ks::items-heu-tr} illustrate the scaled objective value on the training data versus the number of items provided per instance. In Figure~\ref{fig::ks::items-all-tr}, the solid green and red lines are partially obscured by the blue line. It can be observed that there is a negative correlation between the number of items and the objective for all trees with two and four leaves. This trend is more distinct for trees with two leaves. Moreover, the methods that generate trees with four leaves demonstrate considerably better performance. However, it is remarkable that those from \texttt{LH4} perform significantly worse than the other methods, including the benchmark. This can be explained by the fact that the trees are constructed with regard to the accuracy and not the objective function value of the knapsack problems. As illustrated in Figure \ref{fig::ks::items-heu-tr}, the performance of \texttt{LH2} and \texttt{LH4} as well as the performance of \texttt{M2M2} increases again as the number of items rises. Even with the performance of \texttt{M2M2} increasing, for 140 or more items, this method is not able to outperform \texttt{META1}. This means that for these instances, the tree structure cannot be used beneficially. Moreover, the quality of the solutions from \texttt{MICRO2} and \texttt{MICRO4} continues to decline. Notably, the performance differences between \texttt{MICRO2} and \texttt{M2M2} are considerably larger than those between \texttt{MICRO4} and \texttt{M2M4}.

\begin{figure}[htbp]
    \centering
    \begin{subfigure}[t]{0.49\textwidth}
         \centering
         \includegraphics[width=1\linewidth]{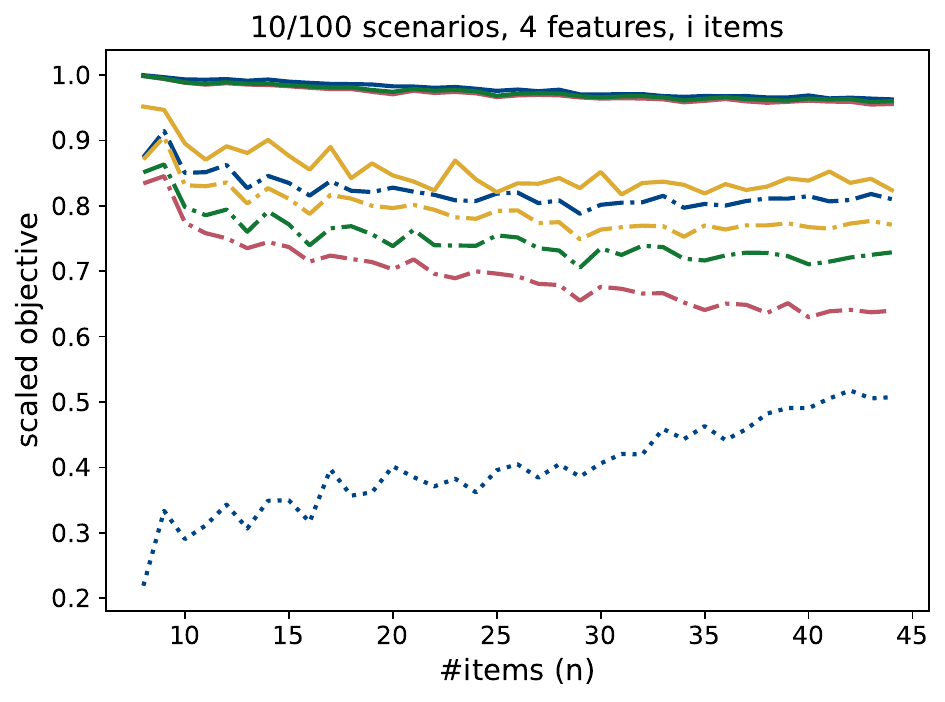}
         \caption{objective training/ \#items.}
         \label{fig::ks::items-all-tr}
    \end{subfigure}
    \begin{subfigure}[t]{0.49\textwidth}
         \centering
         \includegraphics[width=1\linewidth]{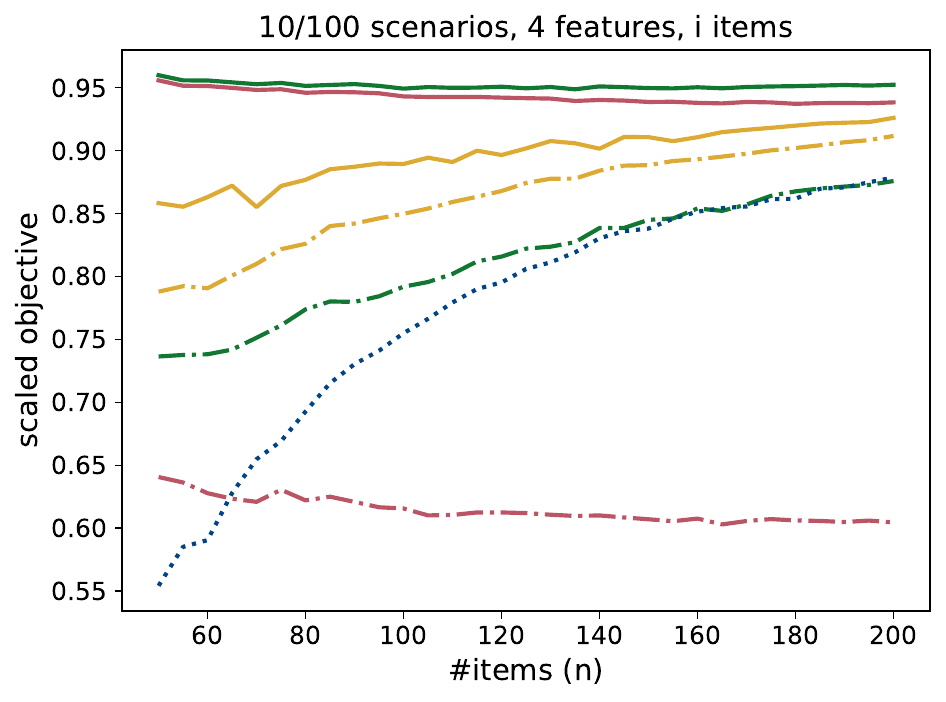}
         \caption{objective training/ \#items.}
         \label{fig::ks::items-heu-tr}
    \end{subfigure}
    \begin{subfigure}[t]{0.49\textwidth}
         \centering
         \includegraphics[width=1\linewidth]{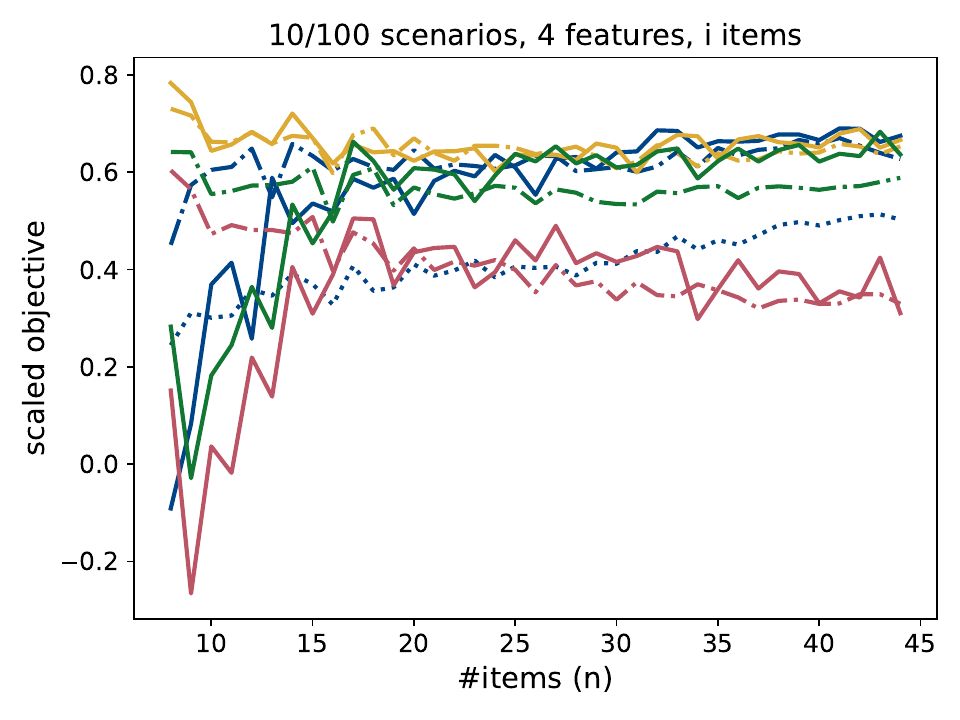}
         \caption{objective test/ \#items.}
         \label{fig::ks::items-all-te}
    \end{subfigure}
    \begin{subfigure}[t]{0.49\textwidth}
         \centering
         \includegraphics[width=1\linewidth]{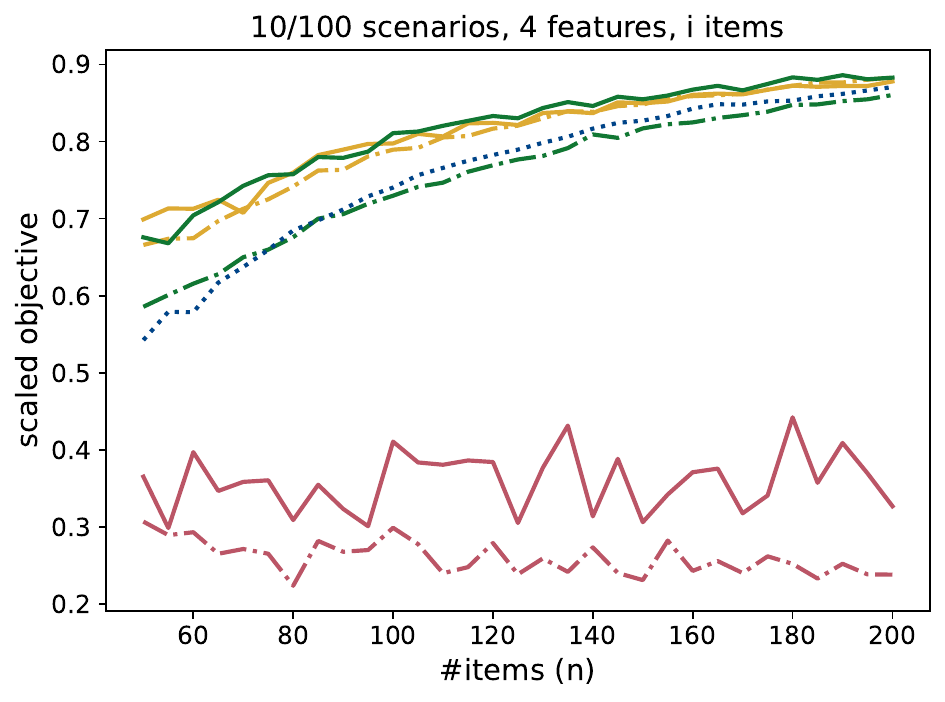}
         \caption{objective test/ \#items.}
         \label{fig::ks::items-heu-te}
    \end{subfigure}
    \begin{subfigure}[t]{0.49\textwidth}
         \centering
         \includegraphics[width=1\linewidth]{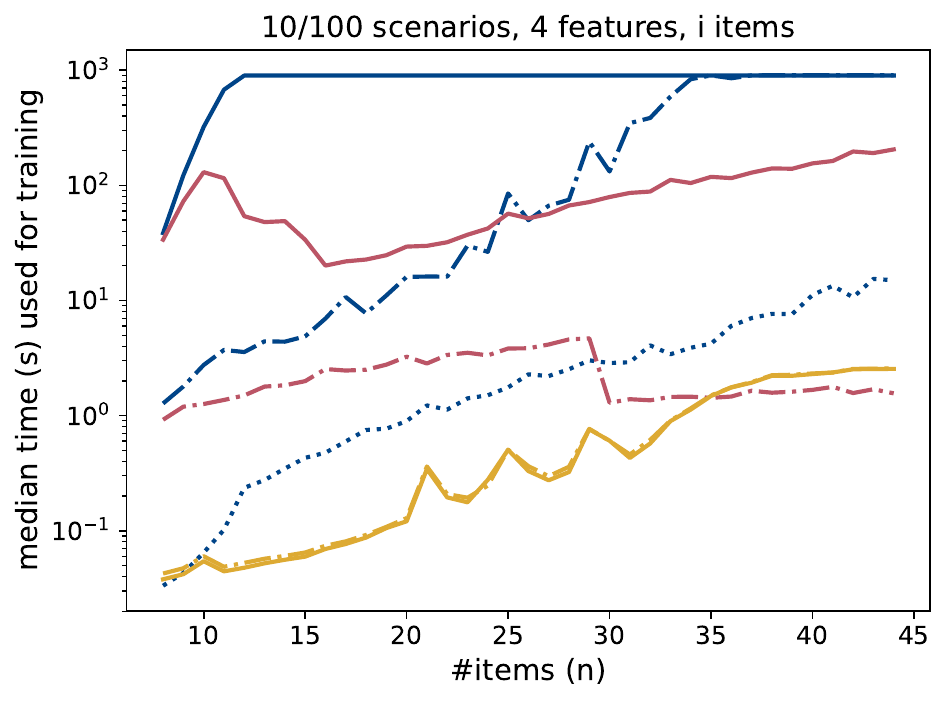}
         \caption{runtime/ \#items.}
         \label{fig::ks::items-all-ti}
    \end{subfigure}
    \begin{subfigure}[t]{0.49\textwidth}
         \centering
         \includegraphics[width=1\linewidth]{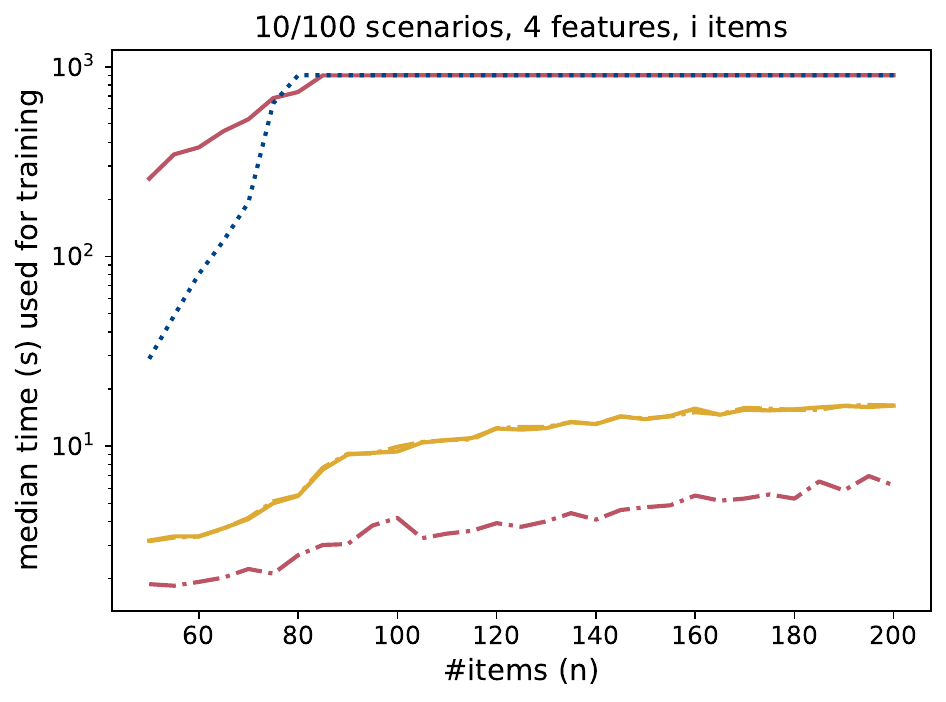}
         \caption{runtime/ \#items.}
         \label{fig::ks::items-heu-ti}
    \end{subfigure}
    \begin{subfigure}[t]{1\textwidth}
         \centering
         \vspace{.4cm}
         
    \end{subfigure}
    \caption{Plots for knapsack and synthetic data, \#items on x-axis. A higher scaled objective indicates a better result. The solution-based approach, which serves as a benchmark, is highlighted in red.}
    \label{fig::ks::items}
\end{figure}

Figures \ref{fig::ks::items-all-te} and \ref{fig::ks::items-heu-te} depict the relationship between the scaled test objective and the number of items per instance. For all methods except \texttt{MICRO2} and \texttt{MICRO4}, a positive correlation can be identified. For instances with a small number of items, the methods \texttt{MIP4}, \texttt{MICRO4}, \texttt{M2M4} are overfitting. The gap between \texttt{MICROx} and \texttt{M2Mx} derived from it increases significantly with an increasing number of items. In contrast to its performance on the training data, the performance on the test data of \texttt{LH2} and \texttt{LH4} is highly competitive. For instances with many items, significantly better performance is achieved when using \texttt{META1} than \texttt{MICROx}. In general, it can be concluded that the performance of our methods appears to be clearly advantageous when evaluated on test data.

The median computing time used is shown in Figures \ref{fig::ks::items-all-ti} and \ref{fig::ks::items-heu-ti}. The time required for solving via \texttt{M2Mx} results almost exclusively from the time used to calculate \texttt{MICROx}. The data points of \texttt{MICROx}, therefore, also represent the computing time of \texttt{M2Mx}. Even small instances for \texttt{MIP4} could not be solved to optimality without hitting the time limit of 900 seconds. It can even be seen that all MIP-based methods except \texttt{MICRO2} cannot be solved to optimality within the time limit. In every run, feasible solutions were found within the time limit. \texttt{LHx} can be solved for all configurations within seconds. For both \texttt{MICRO2} and \texttt{MICRO4}, dips in the progressions can be observed.

\subsubsection{Varying Number of Scenarios}\label{subs::ex::knap::scens}

Figures \ref{fig::ks::scens-all-ti} and \ref{fig::ks::scens-heu-ti} show the computation time for a varying number of scenarios with $n=16$ and $F_S=4$. For \texttt{MIP4}, even instances with ten scenarios couldn't be solved to optimality. Also, when using \texttt{MIP2} and \texttt{MICRO4}, the time limit was hit at 16 and 18 training scenarios respectively. The computation times for \texttt{MICRO2} show a notable dip from 14 to 23 scenarios, which we were not able to explain. In contrast to the MIP-based approaches, when using \texttt{LH2} and \texttt{LH4}, we can generate decision trees in less than two seconds even for instances with 200 scenarios.

\begin{figure}[htbp]
    \centering
    \begin{subfigure}[t]{0.49\textwidth}
         \centering
         \includegraphics[width=1\linewidth]{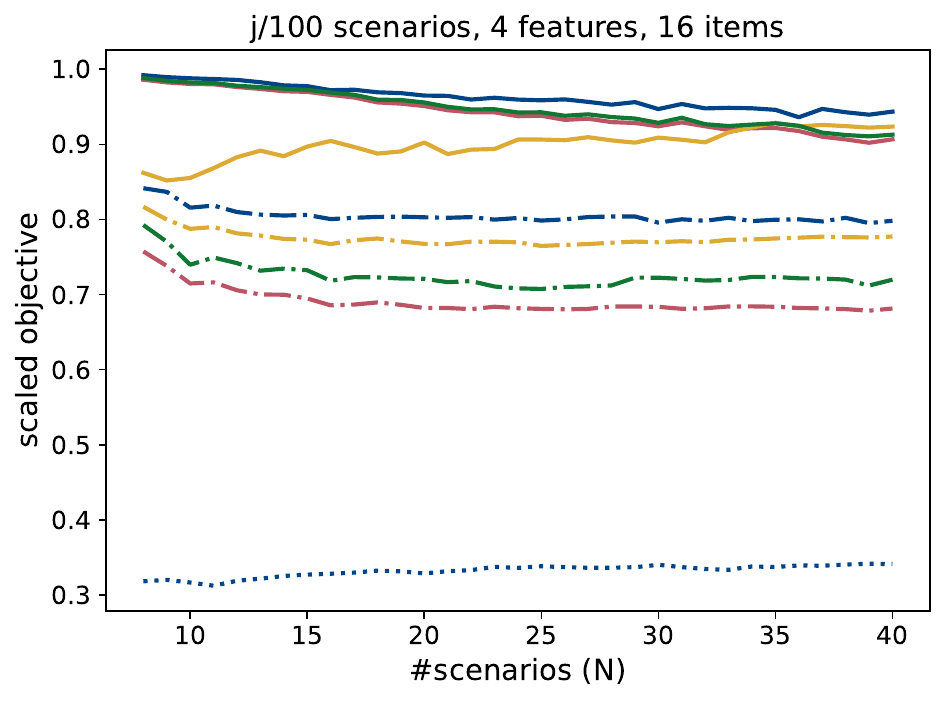}
         \caption{objective training/ \#scenarios.}
         \label{fig::ks::scens-all-tr}
    \end{subfigure}
    \begin{subfigure}[t]{0.49\textwidth}
         \centering
         \includegraphics[width=1\linewidth]{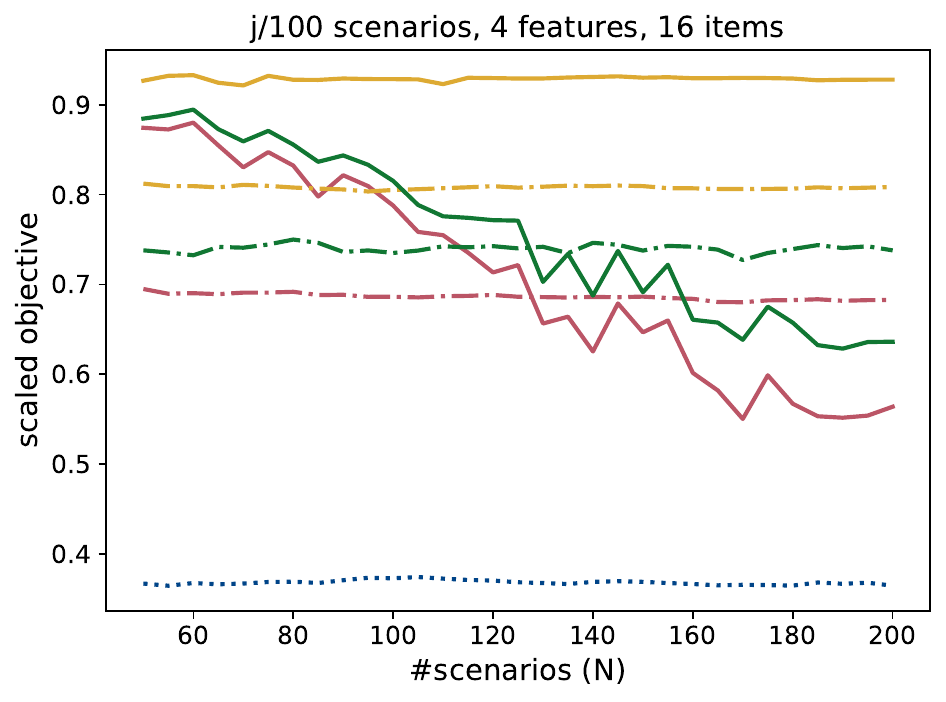}
         \caption{objective training/ \#scenarios.}
         \label{fig::ks::scens-heu-tr}
    \end{subfigure}
    \begin{subfigure}[t]{0.49\textwidth}
         \centering
         \includegraphics[width=1\linewidth]{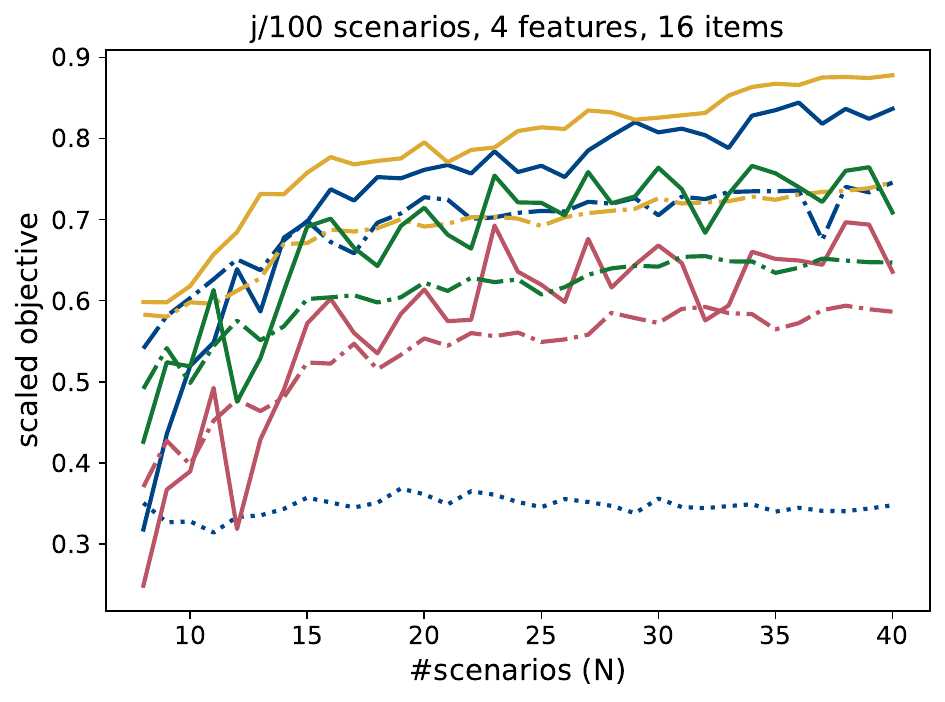}
         \caption{objective test/ \#scenarios.}
         \label{fig::ks::scens-all-te}
    \end{subfigure}
    \begin{subfigure}[t]{0.49\textwidth}
         \centering
         \includegraphics[width=1\linewidth]{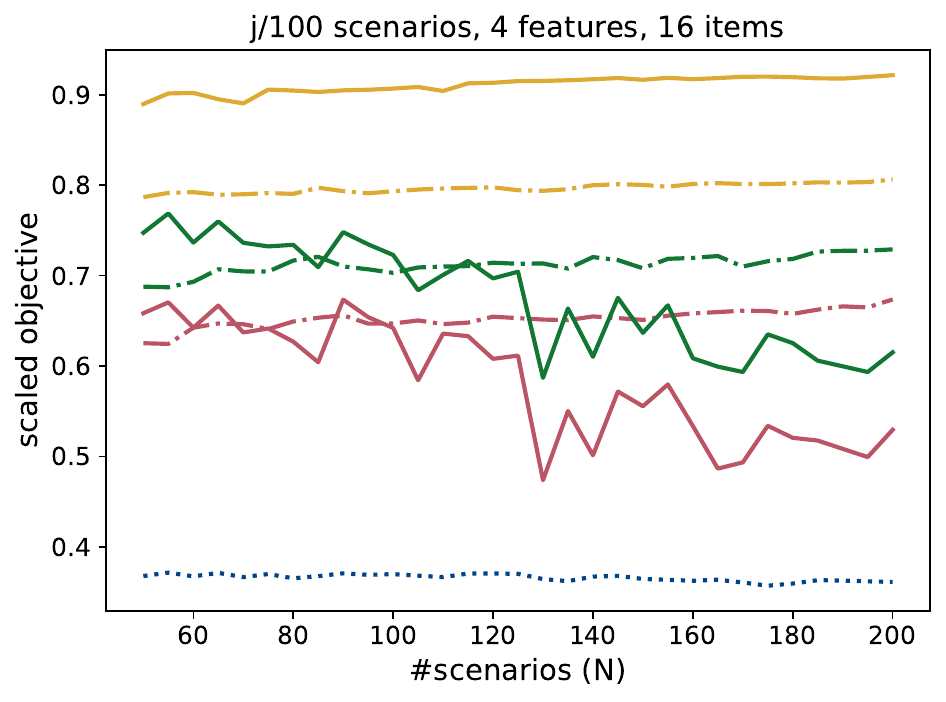}
         \caption{objective test/ \#scenarios.}
         \label{fig::ks::scens-heu-te}
    \end{subfigure}
    \begin{subfigure}[t]{0.49\textwidth}
         \centering
         \includegraphics[width=1\linewidth]{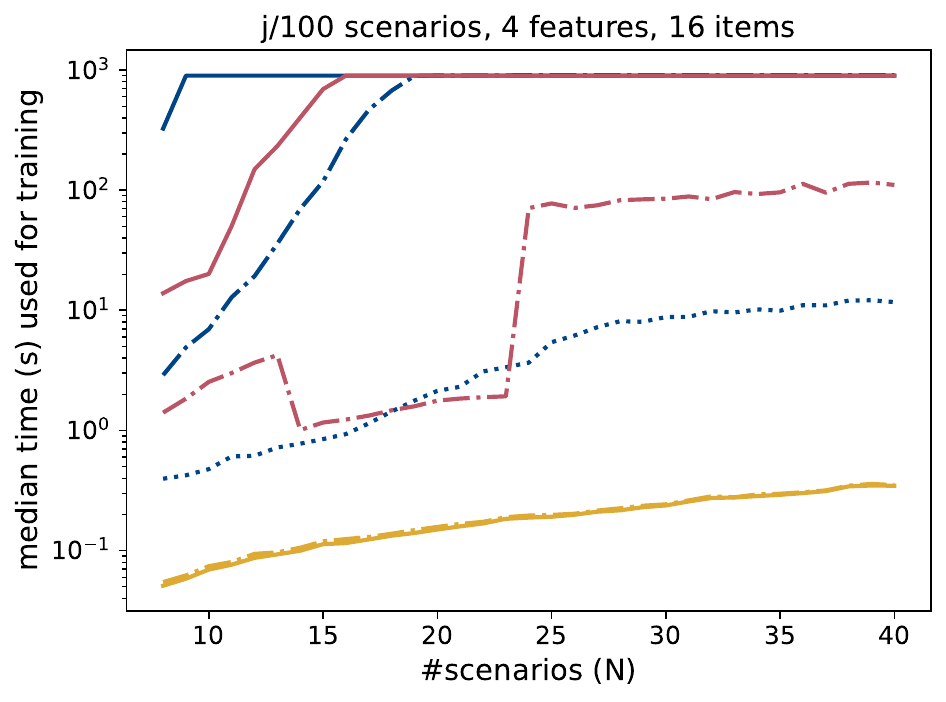}
         \caption{runtime/ \#scenarios.}
         \label{fig::ks::scens-all-ti}
    \end{subfigure}
    \begin{subfigure}[t]{0.49\textwidth}
         \centering
         \includegraphics[width=1\linewidth]{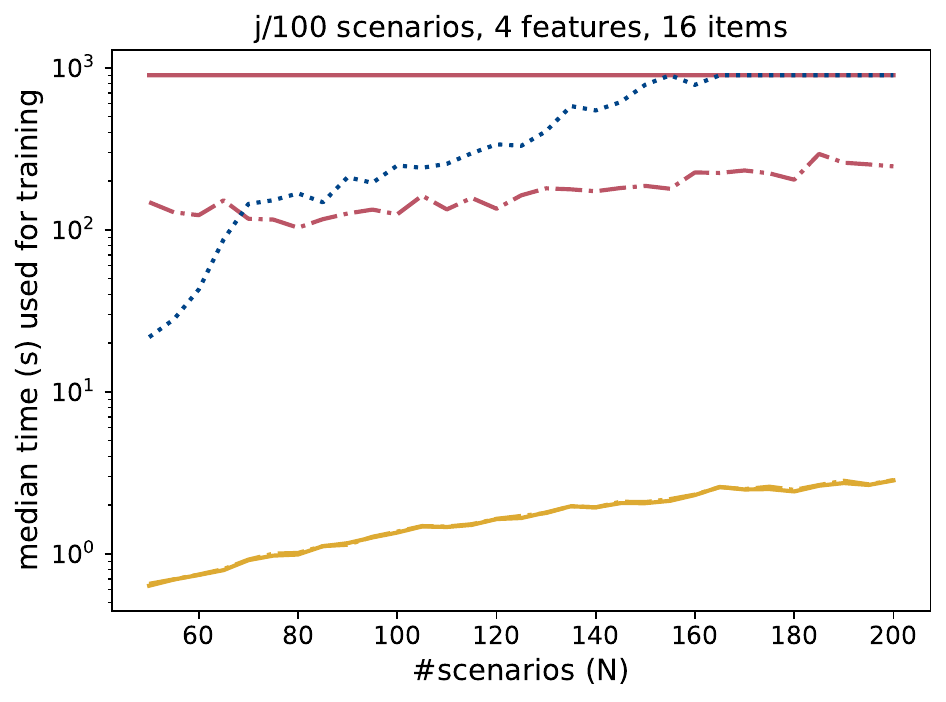}
         \caption{runtime/ \#scenarios.}
         \label{fig::ks::scens-heu-ti}
    \end{subfigure}
    \begin{subfigure}[t]{1\textwidth}
         \centering
         \vspace{.4cm}
         
    \end{subfigure}
    \caption{Plots for knapsack and synthetic data, \#scenarios on x-axis. A higher scaled objective indicates a better result. The solution-based approach, which serves as a benchmark, is highlighted in red.}
    \label{fig::ks::scens}
\end{figure}

The relationship between the scaled objective on the training set and the number of scenarios used for training is shown in Figures \ref{fig::ks::scens-all-tr} and \ref{fig::ks::scens-heu-tr}. For all methods generating trees with two leaves, the influence on the training performance is marginal. The performance of \texttt{MICRO4} and \texttt{M2M4} decreases significantly when more than 60 scenarios are used, and even falls below that of \texttt{MICRO2} and \texttt{M2M2}. This can be explained because no (proven) optimal solutions could be determined for \texttt{MICRO4}. It can be seen that the quality of the solutions of \texttt{MICRO4} and \texttt{M2M4} is strongly correlated and that of \texttt{M2M4} presumably depends on \texttt{MICRO4}.

In Figure \ref{fig::ks::scens-all-te}, it can be seen that for all methods utilized, the performance increases with the number of scenarios used for training. In contrast, Figure \ref{fig::ks::scens-heu-te} shows that the performance of \texttt{MICRO4} and \texttt{M2M4} begins to decline at a certain point as the number of scenarios increases. As with the data in Fig. \ref{fig::ks::scens-heu-tr}, this is due to the intractability of the models. It is noteworthy that \texttt{LH4} clearly outperforms \texttt{MIP4}. In general, all developed methods outperform the benchmark model of the same size. Furthermore, it can be seen that the marginal benefit of further training scenarios decreases.

\end{document}